\documentclass[a4paper,11pt,UKenglish]{article}
\usepackage[top=3.2cm, bottom=3.2cm, left=3.45cm, right=3.45cm]{geometry}
\usepackage[T1]{fontenc}
\usepackage[utf8]{inputenc}
\usepackage{lmodern}
\usepackage{amsmath,amssymb,amsthm,mathtools}
\usepackage{tikz}
\usepackage{microtype}
\usepackage[colorlinks=true,linkcolor=blue,citecolor=blue,urlcolor=blue]{hyperref}

\theoremstyle{plain}
\newtheorem{theorem}{Theorem}[section]
\newtheorem{proposition}[theorem]{Proposition}
\newtheorem{corollary}[theorem]{Corollary}
\newtheorem{lemma}[theorem]{Lemma}
\theoremstyle{remark}

\newtheorem{example}[theorem]{Example}
\newtheorem{definition}[theorem]{Definition}

\newcommand{\QQ}{\mathbb{Q}}
\newcommand{\Cay}{\mathrm{Cay}}
\newcommand{\Av}{\mathrm{Cay}}
\newcommand{\C}{\mathcal{C}}
\newcommand{\cl}[1]{\langle#1\rangle}
\newcommand{\clH}[1]{\cl{#1}_{\mathrm H}}
\DeclareMathOperator{\st}{\mathrm{st}}
\DeclareMathOperator{\img}{im}

\title{Finiteness theorems for Cayley permutation classes}
\author{Giulio Cerbai\\
  \normalsize Independent researcher\\
  \normalsize \texttt{giuliocerbai14@gmail.com}
  \and
  Anders Claesson\\
  \normalsize Department of Mathematics\\
  \normalsize University of Iceland\\
  \normalsize \texttt{akc@hi.is}}
\date{21 October 2026}

\begin{document}

\maketitle

\begin{abstract}
  We prove that every hereditary Cayley permutation class with finitely many
  simple members has a finite basis and an algebraic generating function.
  We first show that every simple Cayley permutation of length at least two
  contains a simple pattern obtained by deleting one or two letters, and
  that the only exceptional Cayley permutations are the simple parallel
  alternations and their symmetries. We then characterize substitution-closed
  classes by the simplicity of their basis elements and show that classes
  with finitely many simple members are partially well ordered. Given the simple
  members and the basis of such a class, we construct a finite system of
  functional equations and compute an annihilating polynomial.
  \medskip

\noindent \textit{Keywords:} Cayley permutation, simple permutation,
 hereditary class, substitution closure, algebraic generating function
\end{abstract}

\thispagestyle{empty}
\section{Introduction}\label{intro}

A \emph{Cayley permutation} is either empty or a word of positive
integers that contains every number between one and its maximum
value. In other words, a Cayley permutation is an endofunction $\omega$ on
$[n]=\{1,2,\ldots,n\}$ such that $\img(\omega)=[m]$ for some $m\leq n$.
We write $\Cay_n$ for the set of Cayley permutations of length $n$ and
$\Cay=\bigcup_{n\geq 0}\Cay_n$. In addition, we write
$\Cay_+=\bigcup_{n\geq1}\Cay_n$ for the set of nonempty Cayley permutations.

Let $\omega$ be a word. The \emph{standardization} of $\omega$ is the word $\st(\omega)$
obtained by replacing the smallest letter of $\omega$ by $1$, the
next smallest by $2$, and so on, keeping repetitions equal.
Note that $\st(\omega)$ is a Cayley permutation.

An occurrence of the classical pattern $\tau\in\Cay_k$ in $\omega\in \Cay_n$
is a subsequence $\alpha$ of $\omega$ such that $\st(\alpha)=\tau$. We write $\tau\leq \omega$
to indicate that $\tau$ occurs in $\omega$. This makes $\Cay$ into a poset. A
(hereditary)\kern0.2em\emph{Cayley permutation class} is an order
ideal $\C$ of $\Cay$; that is, $\tau\leq \omega\in\C \Rightarrow \tau\in\C$.
We say that $\omega$ \emph{avoids} $\tau$ if $\tau\nleq \omega$. Further, for a set
$X\subseteq\Cay$, we let $\Av(X)$ denote the class of Cayley
permutations avoiding each member of $X$.
The \emph{basis} of a class $\C$ is the set $\mathcal B$ of
$\leq$-minimal Cayley permutations not in $\C$; thus
$\C=\Av(\mathcal B)$.

In the companion paper~\cite{cerbai-claesson-simple}, we define simple
and semisimple Cayley permutations and prove a substitution decomposition
analogous to that of (ordinary) permutations. Here we use this decomposition to study
hereditary classes, recalling the necessary definitions and results
from that paper as they are needed. First, we highlight the relevant
results from the literature on permutations.

A permutation is \emph{simple}~\cite{AlbAtk05} if it has no proper
interval of length at least two, where an interval is a set of
consecutive positions whose corresponding values also form a
contiguous set of integers.
A special case of a general theorem by Schmerl and
Trotter~\cite{SchTro93} on indecomposable relational structures
says that every simple permutation of length $n\ge 2$ contains
a simple permutation of length $n-1$ or $n-2$. In other words, if
a permutation is simple then one may obtain a new simple permutation
by deleting at most two points. Permutations where a one-point
deletion is not sufficient are called \emph{exceptional}.
Schmerl and Trotter have classified the critically indecomposable
relational structures up to isomorphism of their skeletons.
And by reading the classification in terms
of permutations, Albert and Atkinson~\cite{AlbAtk05} showed that the
only exceptional permutations are the simple parallel alternations,
i.e.\ those of the form
\[
246\cdots (2m)135\cdots (2m-1),\quad m\ge 2,
\]
and their symmetries. They also showed that a (hereditary)
permutation class containing only finitely many simple permutations
is partially well ordered, finitely based, and has a readily
computable algebraic generating function.

The main goal of this paper is to establish analogous results for
Cayley permutations.
In Section~\ref{deletions}, we prove a deletion theorem and classify
the exceptional Cayley permutations.
Section~\ref{substitution-closed-classes} is devoted to substitution-closed
classes. We study closure under inflation and the resulting generating functions,
with and without the additional restriction that components at repeated quotient
values consist of one letter.
In Section~\ref{finite-simple-classes} we remove the hypothesis of substitution
closure and study general classes with finitely many simples.
We prove that every class with finitely many simple members is partially well ordered (Theorem~\ref{pwo}), has a finite basis (Theorem~\ref{finite-basis}), and has an
algebraic generating function (Theorem~\ref{algebraicity}). The corresponding
assertions for finitely many semisimple members follow as special cases.

\section{Exceptional Cayley permutations}\label{deletions}

Recall that every simple permutation of length $n\geq2$ contains
a simple permutation of length $n-1$ or $n-2$, by Schmerl and
Trotter~\cite{SchTro93}. Those for which no one-point deletion is
simple are called exceptional.
Albert and Atkinson~\cite{AlbAtk05} showed that they are precisely
the simple parallel alternations and their symmetries.

We wish to prove that both statements carry over to Cayley
permutations.
First, we recall some definitions from the companion
paper~\cite{cerbai-claesson-simple}.

Let $\omega$ be a word. We say that $\beta$ is a \emph{factor} of $\omega$ if there
are words $\alpha$ and $\gamma$ such that $\omega=\alpha\beta\gamma$.
Let $\omega=w_1\cdots w_n\in\Cay_n$. If $I=[a,b]\subseteq[n]$ is a
nonempty positional interval, write
\[
 \omega|_I=w_aw_{a+1}\cdots w_b
 \qquad\text{and}\qquad
 \omega(I)=\{w_a,w_{a+1},\ldots,w_b\}
\]
for the factor of $\omega$ at positions $I$ and for its set of values.
More generally, if $S=\{i_1<\cdots<i_r\}\subseteq[n]$, write
$\omega|_S=w_{i_1}\cdots w_{i_r}$ for the corresponding subword
and $\omega(S)$ for its values.
We say that $I$ is a \emph{Hertzsprung interval}, or \emph{H-interval},
of $\omega$ if there is a value interval $J=[c,d]$ such that
\begin{equation}\label{eq:h-interval}
 \omega(I)=J
 \quad\text{and}\quad
 \omega^{-1}(J)=I.
\end{equation}
A nonempty H-interval occupies consecutive positions and uses every value
in a consecutive range, which we call a \emph{value band}.
Additionally, it contains every copy in $\omega$ of every value in $J$ due
to the second equality, the \emph{saturation condition}.
Note that $[n]$ is always an H-interval, while a singleton
$[i,i]=\{i\}$ is an H-interval precisely when the value $\omega(i)$ occurs
nowhere else in $\omega$.
We also regard the empty set as an H-interval of every Cayley permutation,
with empty value band.

\begin{definition}
A Cayley permutation $\omega\in\Cay_n$ is \emph{semisimple} if every
H-interval of $\omega$ is empty, a singleton, or $[n]$;
these H-intervals are called \emph{trivial}. The word is \emph{primitive}~\cite{CCEGcayspec}
if adjacent letters are distinct. It is \emph{simple} if it is semisimple
and primitive, or if $\omega=11$.
\end{definition}

The empty word is semisimple and primitive, hence simple.
Both semisimplicity and simplicity restrict to the usual definition of
simplicity for permutations. Thus $1$, $11$, $12$, and $21$ are simple,
whereas $111$ is semisimple but not simple. A pair of equal adjacent
letters is called a \emph{flat step}.

To start, we take care of semisimple Cayley permutations that are
not primitive by showing that removing a flat step from a semisimple
Cayley permutation preserves semisimplicity.
Given $\omega=w_1\cdots w_n\in\Cay_n$ and a position $i\in [n]$, let
\[
  \omega\setminus i=\st(w_1\cdots w_{i-1}w_{i+1}\cdots w_n)
\]
denote the one-point deletion at $i$.

\begin{proposition}\label{flat-deletion}
  Let $\omega=w_1\cdots w_n\in\Cay_n$ be semisimple with $n\geq2$, and suppose $w_i=w_{i+1}$. Then
  $\omega\setminus(i+1)$ is a semisimple Cayley
  permutation of length $n-1$.
\end{proposition}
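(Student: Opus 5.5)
The plan is to argue by contraposition: any nontrivial H-interval of $\omega\setminus(i+1)$ will be lifted to a nontrivial H-interval of $\omega$. First, since $w_i=w_{i+1}$, deleting $w_{i+1}$ removes no value from the image of $\omega$. So the word $w_1\cdots w_iw_{i+2}\cdots w_n$ is already a Cayley permutation, and the standardization in $\omega\setminus(i+1)$ does nothing. Write $\omega'$ for this word, which has length $n-1$. Its value set equals that of $\omega$, and the multiplicities are the same except that $v:=w_i$ occurs one fewer time.

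Define the position map $\phi:[n-1]\to[n]\setminus\{i+1\}$ by $\phi(j)=j$ for $j\le i$ and $\phi(j)=j+1$ for $j>i$. Then $\omega'(j)=\omega(\phi(j))$ for every $j$. Suppose $I'$ is an H-interval of $\omega'$ with value band $J$, where $2\le|I'|<n-1$. I will split into two cases according to whether $v\in J$.

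\emph{Case $v\in J$.} By saturation in $\omega'$, every copy of $v$ in $\omega'$ lies in $I'$, and in particular $i\in I'$. Set $I=\phi(I')\cup\{i+1\}$. Since $I'$ is an interval containing $i$, the set $I$ is a positional interval: $\phi$ only shifts the part after $i$ by one, and the gap at $i+1$ is filled. We have $\omega(I)=\omega'(I')\cup\{v\}=J$. The preimage $\omega^{-1}(J)$ consists of $\phi(\omega'^{-1}(J))=\phi(I')$ together with the extra copy of $v$ at position $i+1$, so it equals $I$. Moreover $3\le|I|=|I'|+1<n$, so $I$ is a nontrivial H-interval of $\omega$.

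\emph{Case $v\notin J$.} Then $i\notin I'$, so $I'$ lies entirely in $[1,i-1]$ or entirely in $[i+1,n-1]$. Hence $I=\phi(I')$ is again a positional interval, and $\omega(I)=J$. The copies in $\omega$ of values in $J$ are exactly the $\phi$-images of their copies in $\omega'$, since only a copy of $v$ was deleted. Therefore $\omega^{-1}(J)=I$. Finally $|I|\ge2$, and $I\ne[n]$ because $i\notin I$, so $I$ is nontrivial.

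Either case contradicts the semisimplicity of $\omega$, so every H-interval of $\omega'$ is trivial; the case $n=2$, where $\omega=11$ and $\omega'=1$, is immediate. The main point needing care is the first case: one must check that adjoining position $i+1$ keeps the set consecutive and preserves saturation. Both follow because $i\in I'$ is forced by saturation.
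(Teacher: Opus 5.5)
Your proof is correct and is essentially the paper's argument. The paper pulls back an H-interval $I'$ of $\omega'$ along the order-preserving surjection $g\colon[n]\to[n-1]$ that merges positions $i$ and $i+1$, and $g^{-1}(I')$ is exactly your $\phi(I')\cup\{i+1\}$ or $\phi(I')$ according as $i\in I'$ or not, so your case split on whether $v\in J$ just unfolds that single preimage computation.
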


\begin{proof}
  Let $\omega'=w_1\cdots w_iw_{i+2}\cdots w_n$ and consider the
  order-preserving surjection
  \[
    g\colon[n]\to[n-1],
    \qquad
    g(j)=
    \begin{cases}
      j   & \text{if $j\leq i$},\\
      j-1 & \text{if $j>i$}.
    \end{cases}
  \]
  Since $w_i=w_{i+1}$, we have $\omega(j)=\omega'(g(j))$ for every $j$.
  Thus $\omega$ and $\omega'$ use the same values, so $\omega'$ is already a Cayley
  permutation and $\omega\setminus(i+1)=\omega'$.

  Let $I'$ be a nonempty H-interval of $\omega'$ with value band $J$.
  Its preimage $I=g^{-1}(I')$ is a nonempty positional interval, since
  $g$ is order-preserving and surjective. Moreover,
  \[
    \omega(I)=\omega'(I')=J,
    \qquad
    \omega^{-1}(J)=g^{-1}\bigl((\omega')^{-1}(J)\bigr)=I.
  \]
  Hence $I$ is an H-interval of $\omega$, so semisimplicity makes it a singleton
  or $[n]$. Taking its image under $g$ makes $I'=g(I)$ a singleton or
  $[n-1]$, proving that $\omega'$ is semisimple.
\end{proof}

A \emph{relational structure} is a set together with a collection
of binary relations on it.
Schmerl and Trotter~\cite{SchTro93} defined \emph{indecomposable}
relational structures and proved that a relational structure is
indecomposable if and only if it contains no nontrivial
\emph{module}.
Here, a module is a subset~$M$ of the structure
such that, for every relation $R$, every $i,j\in M$, and every $x\notin M$, we have
$iRx\Longleftrightarrow jRx$ and $xRi\Longleftrightarrow xRj$.
A module is nontrivial if it has at least two elements and is a proper subset
of the ground set.
The empty structure is allowed, and it is indecomposable since it has
no nontrivial module.

Given a Cayley permutation $\omega\in\Cay_n$, define the
relational structure
\[
  R(\omega)=\bigl([n],\,<_1,\,<_2\bigr)
\]
on the set of positions $[n]$, carrying two binary relations:
the position order $i<_1 j$ if $i<j$, and the value order
$i<_2 j$ if $\omega(i)<\omega(j)$.
The position order is a linear order, while the value order is
a strict weak order whose ties are the positions of equal value.
Both are irreflexive.

\begin{lemma}[\cite{cerbai-claesson-simple}]\label{modules}
  The modules of $R(\omega)$ are the H-intervals of $\omega$ together with its
  nonempty positional intervals with constant value.
\end{lemma}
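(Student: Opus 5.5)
We prove the two inclusions separately, working directly from the definitions of module and H-interval. A set $M\subseteq[n]$ is a module of $R(\omega)$ if and only if, for every $x\notin M$, all elements of $M$ relate to $x$ in the same way under both $<_1$ and $<_2$.

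\emph{Modules are of the two stated kinds.} Let $M$ be a module.

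First, $M$ is a positional interval. Suppose $i<x<j$ with $i,j\in M$ and $x\notin M$. Then $i<_1x$ but $x<_1 j$, so $j<_1 x$ fails. This contradicts the module condition for $<_1$, so no such $x$ exists.

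Now suppose $M$ is nonempty and not of constant value. Let $J=[c,d]$, where $c=\min\omega(M)$ and $d=\max\omega(M)$, so $c<d$. We must show that $\omega(M)=J$ and $\omega^{-1}(J)=M$. It suffices to show that no $x\notin M$ has $c\le\omega(x)\le d$. Indeed, $\omega$ is a Cayley permutation, so every value in $J$ is attained somewhere. If no outside position takes a value in $J$, then every value in $J$ is attained inside $M$, giving $\omega(M)=J$, and every position with value in $J$ lies in $M$, giving $\omega^{-1}(J)=M$.

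So take $x\notin M$ and pick $i,j\in M$ with $\omega(i)=c$ and $\omega(j)=d$. There are three cases.
\begin{itemize}
\item If $c<\omega(x)<d$, then $i<_2x$ but not $j<_2x$, violating the module condition.
\item If $\omega(x)=c$, then $x<_2 j$ but not $x<_2 i$, again a violation.
\item If $\omega(x)=d$, then $i<_2 x$ but not $j<_2 x$, again a violation.
\end{itemize}
The constant-value case is exactly the second family in the statement. The empty set is the empty H-interval, since the structure is allowed to have it as a trivial module.

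\emph{Both families are modules.} Let $I$ be a nonempty H-interval with value band $J$, and let $x\notin I$.
\begin{itemize}
\item For $<_1$: since $I$ is a positional interval, $x$ lies entirely to the left or entirely to the right of $I$, so it relates identically to all of $I$.
\item For $<_2$: the saturation condition gives $\omega(x)\notin J$. Hence $\omega(x)<c$ or $\omega(x)>d$, and in either case $x$ compares identically with every element of $I$.
\end{itemize}
Now let $I$ be a positional interval on which $\omega$ takes a constant value $v$, and let $x\notin I$. The $<_1$ argument is the same as above. For $<_2$, every element of $I$ has value $v$, so comparing $\omega(x)$ with $v$ gives the same outcome for all elements of $I$, including when $\omega(x)=v$.

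The only step requiring care is the first direction. There, the boundary cases $\omega(x)=c$ and $\omega(x)=d$ cannot be dismissed by strict betweenness; they need the separate argument above, which uses that $c<d$. This is precisely why modules of constant value must be allowed as a separate family.
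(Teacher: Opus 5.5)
Your proof is correct and complete. The $<_1$ condition forces positional convexity, and the three-case check on $\omega(x)\in[c,d]$ (using $c<d$ for the boundary cases) combined with the Cayley property yields both $\omega(M)=J$ and saturation; the converse verifications are also right. The paper gives no argument here, only citing the companion paper, and your direct two-inclusion verification from the definitions is the natural proof. One small fix: the empty set is a module simply because the module condition is vacuous, and it counts as an H-interval by the paper's convention.
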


\begin{theorem}[\cite{cerbai-claesson-simple}]\label{indecomposable}
  A Cayley permutation $\omega$ is simple if and only if the relational
  structure $R(\omega)$ is indecomposable.
\end{theorem}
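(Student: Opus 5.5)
Both directions reduce to Lemma~\ref{modules}, which lists the modules of $R(\omega)$. By definition, $R(\omega)$ is indecomposable exactly when every module $M$ is trivial, meaning $|M|\le 1$ or $M=[n]$. So the task is to show that $\omega$ is simple if and only if every H-interval and every nonempty constant-value positional interval is trivial in this sense. I would first dispose of the small cases. For $n\le 1$ both sides hold trivially. For $\omega=11$ the only subsets of size at least two equal $[2]$, so $R(11)$ is indecomposable, and $11$ is simple by definition.

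\emph{Forward direction.} Suppose $\omega\neq 11$ is simple, so it is semisimple and primitive. Every H-interval is then empty, a singleton, or $[n]$, hence a trivial module. Primitivity means no two adjacent letters are equal. So a positional interval with constant value has length one, unless it is all of $[n]$ with $n\ge 2$ and $\omega$ constant. That last case is impossible, since it would have a flat step. Hence every module is trivial.

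\emph{Reverse direction.} Suppose $R(\omega)$ is indecomposable. Every H-interval is a module, so it has at most one element or equals $[n]$; in other words, $\omega$ is semisimple. If $\omega$ is primitive we are done. Otherwise there is a flat step $w_i=w_{i+1}$, so $\{i,i+1\}$ is a constant-value positional interval and hence a module. Indecomposability then forces $\{i,i+1\}=[n]$, so $n=2$ and $\omega=11$, which is simple.

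I expect no real obstacle, because all the substantive work sits in Lemma~\ref{modules}. The only point needing care is matching conventions. The empty set, singletons, and the whole ground set must be the trivial modules in Schmerl--Trotter's sense, agreeing with the paper's ``trivial'' H-intervals. I also need to check that the exceptional word $11$, and more generally constant words $1^n$, are handled correctly. For $n\ge3$, the word $1^n$ has a nontrivial module $\{1,2\}$, and it is not simple, consistent with the statement.
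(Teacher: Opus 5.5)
Your proof is correct and follows the route the paper implies: the paper cites this result from the companion paper without proof and places it immediately after Lemma~\ref{modules}, and the theorem follows from that lemma exactly as you argue. You use both inclusions of the lemma correctly: the reverse direction needs only that H-intervals and constant runs are modules, the forward direction needs that every module is one of these, and your small cases and the special word $11$ are handled properly (in the forward direction, primitivity alone already excludes constant positional intervals of length at least two, so your ``unless $\omega$ is constant'' clause is redundant but harmless).
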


Theorem~\ref{indecomposable} lets
us use the aforementioned result of Schmerl and Trotter.

\begin{theorem}\label{deletion}
  Every simple Cayley permutation of length $n\geq2$ contains a
  simple Cayley permutation of length $n-1$ or $n-2$, obtained
  by deleting one or two letters and standardizing.
\end{theorem}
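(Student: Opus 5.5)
We will derive Theorem~\ref{deletion} from the Schmerl--Trotter theorem by way of Theorem~\ref{indecomposable}. Let $\omega\in\Cay_n$ be simple with $n\ge2$. If $\omega=11$, deleting one letter gives $1$, which is simple, so we are done. Otherwise $\omega$ is semisimple and primitive, and Theorem~\ref{indecomposable} says that $R(\omega)$ is indecomposable. The result of Schmerl and Trotter covers indecomposable relational structures with at least two elements (in their setting the relations are binary, and some conventions also require $|X|\ge 7$ or handle small cases separately; we return to this below). It gives a subset $S\subseteq[n]$ with $|S|\in\{n-1,n-2\}$ such that the induced substructure $R(\omega)|_S$ is indecomposable.

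Next we identify induced substructures with patterns. Write $\omega|_S$ for the subword on $S$ and $\tau=\st(\omega|_S)$. The order-preserving bijection $S\to[|S|]$ carries $<_1$ on $S$ to the position order of $\tau$. Standardization preserves the relative order of values, including ties, so the same bijection carries $<_2$ restricted to $S$ to the value order of $\tau$. Hence $R(\omega)|_S\cong R(\tau)$. Since $R(\tau)$ is indecomposable, Theorem~\ref{indecomposable} shows that $\tau$ is simple. Moreover $\tau$ is obtained from $\omega$ by deleting one or two letters and standardizing, which is the claim.

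The main obstacle is making sure the Schmerl--Trotter theorem applies exactly as stated, for three reasons.

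\begin{itemize}
\item \emph{Small cases.} Their theorem is usually stated for indecomposable structures with $|X|\ge 7$, with the conclusion that there is an indecomposable substructure of size $|X|-1$ or $|X|-2$. The small cases $2\le n\le 6$ need a separate check. I would first look for a version of the theorem that is valid for all $|X|\ge 2$ (for binary structures this holds, since critically indecomposable structures are classified). Failing that, I would verify directly that every simple Cayley permutation of length at most $6$ has a simple one- or two-point deletion. For this, note that the simple Cayley permutations of length $2$ are $11$, $12$, $21$, and each has the simple deletion $1$. For larger $n$, a short case analysis or computer check suffices, using Proposition~\ref{flat-deletion}-type reasoning.
\item \emph{Non-order relations.} The value relation $<_2$ is not a linear order, but Schmerl--Trotter allow arbitrary binary relations, so this causes no difficulty.
\item \emph{Definition of indecomposability.} I must confirm that their notion of indecomposability, namely having no nontrivial module, matches the one used in Theorem~\ref{indecomposable}. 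This is guaranteed by Lemma~\ref{modules} and the definitions recalled above.
\end{itemize}
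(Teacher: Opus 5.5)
Your argument is the paper's: Theorem~\ref{indecomposable} makes $R(\omega)$ indecomposable, Schmerl and Trotter supply an indecomposable induced substructure on $n-1$ or $n-2$ points, and since standardization preserves both orders this substructure is $R(\st(\omega|_S))$, so $\st(\omega|_S)$ is simple. Your small-case worry is resolved more cheaply than you propose: for $n\le4$ the paper just deletes down to one or two letters, since every Cayley permutation of length at most two is simple, and for $n\ge5$ it applies Schmerl--Trotter's Corollary~2.3 directly. The $|X|\ge7$ threshold you recall belongs to their sharper exactly-$(n-2)$ statement, which the paper uses only in Corollary~\ref{simple-extension}, so no check of lengths five and six is needed; the flat-step reasoning is also beside the point, since every simple word other than $11$ is primitive.
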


\begin{proof}
  Let $\omega\in\Cay_n$ be simple.  For $n=2$, either deletion leaves $1$; for
  $n=3,4$, keeping any two positions and standardizing leaves a Cayley
  permutation of length two, which is simple.  We may therefore assume that
  $n\geq5$. By
  Theorem~\ref{indecomposable}, $R(\omega)$ is indecomposable.  Since its
  relations are binary and irreflexive, it follows by
  Schmerl and Trotter~\cite[Corollary~2.3]{SchTro93} that $R(\omega)$ has an
  indecomposable induced substructure on $n-1$ or $n-2$ points.
  Standardization preserves both the position order and the value order, so the
  substructure of $R(\omega)$ induced on a set $S$ of positions is isomorphic to
  $R(\st(\omega|_S))$; take $S$ with $R(\st(\omega|_S))$ indecomposable and
  $|S|\in\{n-1,n-2\}$.  Since $|S|\geq3$,
  Theorem~\ref{indecomposable} makes $\st(\omega|_S)$ simple.
\end{proof}

\begin{corollary}\label{semisimple-deletion}
  Every semisimple Cayley permutation of length $n\geq2$ has a semisimple
  pattern of length $n-1$ or $n-2$.
\end{corollary}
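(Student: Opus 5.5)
The plan is to split according to whether the semisimple Cayley permutation $\omega\in\Cay_n$, $n\geq2$, is primitive. We first treat the cases that the definitions and earlier results settle at once, and then the non-primitive case.

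\emph{Primitive case.} If $\omega$ is primitive, then by definition $\omega$ is simple. Theorem~\ref{deletion} then gives a simple pattern of length $n-1$ or $n-2$ obtained by deleting one or two letters. A simple Cayley permutation is semisimple: this holds by definition when it is semisimple and primitive, and the remaining simple word $11$ has only the H-intervals $\emptyset$ and $[2]$, since the value $1$ is repeated. So this pattern is the one we want.

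\emph{Non-primitive case.} If $\omega$ is not primitive, it has a flat step $w_i=w_{i+1}$ for some $i$. Proposition~\ref{flat-deletion} then says that $\omega\setminus(i+1)$ is a semisimple Cayley permutation of length $n-1$. It is a pattern of $\omega$, because it is the standardization of a subsequence; in fact no standardization is needed, as that proof shows. So we obtain a semisimple pattern of length $n-1$ directly.

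The two cases are exhaustive, so the corollary follows. The only delicate point is confirming that the simple patterns produced by Theorem~\ref{deletion} are semisimple, which amounts to the small check on $11$ above. Nothing here is a real obstacle, since the substantive work sits in Theorem~\ref{deletion} and Proposition~\ref{flat-deletion}.
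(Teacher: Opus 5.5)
Your proof is correct and follows the paper's own argument: a flat step is handled by Proposition~\ref{flat-deletion}, and otherwise the word is simple and Theorem~\ref{deletion} applies. Your extra check that the simple patterns produced, including $11$, are semisimple is a detail the paper leaves implicit.
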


\begin{proof}
  If there is a flat step, apply Proposition~\ref{flat-deletion}.
  Otherwise the word is simple, and Theorem~\ref{deletion} applies.
\end{proof}

For $n\ge 2$, we call a simple Cayley permutation $\omega\in\Cay_n$
\emph{exceptional} if $\omega\setminus i$ is not simple for each
$i\in[n]$. An exceptional word has length
at least four, since every word of length at most two is simple.
In particular, it is primitive.

Proving that the parallel alternations are the only exceptional
Cayley permutations requires a bit more work.
Following Schmerl and Trotter~\cite{SchTro93}, we say that two
ordered pairs $(a,b)$ and $(c,d)$ of distinct elements of a
relational structure have the same \emph{type} if
$aR_ib\Leftrightarrow cR_id$ and $bR_ia\Leftrightarrow dR_ic$
for every relation $R_i$.
The equivalence classes of ordered pairs induced by the type
relation form the \emph{type skeleton}.

As an example, consider the Cayley permutation $\omega=231146561$.
In the type skeleton of $R(\omega)$, the type of the arc $i\to j$
records the comparison of the positions $i,j$ and the comparison
of the values $\omega(i),\omega(j)$. There are six possible types:
\begin{align*}
A=(<,<),\qquad B=(<,=),\qquad C=(<,>),\\[0.25em]
D=(>,<),\qquad E=(>,=),\qquad F=(>,>),
\end{align*}
where the first sign compares $i$ with $j$ and the second compares
$\omega(i)$ with $\omega(j)$. The following matrix displays the type skeleton
of $R(\omega)$. Its rows and columns are labelled $i_{\omega(i)}$, and the
entry in row $i_{\omega(i)}$ and column $j_{\omega(j)}$ is the type of the
ordered pair $(i,j)$:
\[
  \begin{array}{c|*{9}{c}}
     &1_2&2_3&3_1&4_1&5_4&6_6&7_5&8_6&9_1\\
    \hline
    1_2&-&A&C&C&A&A&A&A&C\\
    2_3&F&-&C&C&A&A&A&A&C\\
    3_1&D&D&-&B&A&A&A&A&B\\
    4_1&D&D&E&-&A&A&A&A&B\\
    5_4&F&F&F&F&-&A&A&A&C\\
    6_6&F&F&F&F&F&-&C&B&C\\
    7_5&F&F&F&F&F&D&-&A&C\\
    8_6&F&F&F&F&F&E&F&-&C\\
    9_1&D&D&E&E&D&D&D&D&-
  \end{array}
\]
For instance, the type of $(1,2)$ is $A=(<,<)$ since we have
positions $1<2$ and values $\omega(1)<\omega(2)$.
In the matrix, two ordered pairs have the same type exactly when
their cells carry the same letter, and the type skeleton is the
partition of the off-diagonal cells into the six letter classes.
Note also that the matrix makes the modules transparent: a set is
a module when its rows agree outside its own columns.
For instance, the set $\{1,2\}$ is a module since rows $1_2,2_3$
agree outside columns $1_2,2_3$; it is also an H-interval,
occupying consecutive positions and the saturated value band
$\{2,3\}$. Similarly, $[5,8]$ is a module as well as an H-interval
with saturated value band $\{4,5,6\}$. These are the maximal proper
H-intervals. The set $\{3,4\}$ is another module: it is a constant
positional interval, but not an H-interval since the value $1$
occurs again at position $9$ (hence saturation fails).

Now, in the language of Schmerl and Trotter, an indecomposable
structure is \emph{critically indecomposable} if no one-point
deletion of the structure is indecomposable. The two
authors~\cite[Theorem~5.1]{SchTro93} have fully classified the
type skeletons of critically indecomposable structures.
We will only need to count their types to show that no Cayley
permutations except for the parallel alternations are
exceptional.

\begin{lemma}\label{four-types}
  A critically indecomposable type skeleton has at most four types.
\end{lemma}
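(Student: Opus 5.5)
The plan is to read the count of types directly off the classification of Schmerl and Trotter~\cite[Theorem~5.1]{SchTro93}, which the paper is already relying on. That theorem lists, up to isomorphism of type skeletons, every critically indecomposable structure. The list consists of a small number of families (roughly, the bipartite or ``parallel'' alternation skeletons and the ``wedge'' or half-graph style alternations, each in a few variants). Each family is described by an explicit partition of the ordered pairs of distinct elements into types. So the proof is a finite inspection: go through the families one at a time and count the type classes.

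For each family I will first record the ground set, typically split into two halves $\{a_1,\dots,a_m\}$ and $\{b_1,\dots,b_m\}$. I will then note that a type is determined by which of a bounded number of ``relation patterns'' an ordered pair falls into. These patterns are:
\begin{itemize}
\item pairs within the first half, in increasing or decreasing index order;
\item the analogous pairs within the second half;
\item cross pairs $(a_i,b_j)$ with $j\le i$ or $j>i$, and their reverses.
\end{itemize}

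Because the type of $(x,y)$ determines the type of $(y,x)$, types come in reverse pairs, or are self-reverse. The key observation is that in each family of the classification, several of these a priori patterns coincide. For example, the within-half pairs of the two halves share types, or one half carries no relation at all, so that its pairs are symmetric and form a single self-reverse type. After these identifications, the total number of classes is at most four.

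The main obstacle is bookkeeping rather than any idea. The classification is stated in terms of skeletons, usually as graphs, tournaments or orders with the two halves and ``above/below the diagonal'' conventions. It has to be translated faithfully into the language of ordered-pair types used here. I would first try to phrase the argument uniformly: for each family, the classification exhibits the skeleton as determined by at most two binary relations among the halves, namely
\begin{itemize}
\item a ``within-block'' relation that is either empty or a linear order;
\item a ``cross'' relation given by the threshold $j\le i$ versus $j>i$.
\end{itemize}
The within-block relation yields at most one type pair per block, shared by the two blocks. The cross relation yields one type pair, or two self-reverse types. This gives at most four types in every case.

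If the uniform phrasing does not match the statement of~\cite[Theorem~5.1]{SchTro93} exactly, I will fall back on a case-by-case check of each listed family, including the small sporadic cases. These sporadic cases can be verified by direct enumeration of their pairs.
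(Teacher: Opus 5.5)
Your fallback plan, a family-by-family count over the Schmerl--Trotter list, is exactly the paper's proof. The paper takes the list to be the nine families $\mathcal G_r,\mathcal P_r,\mathcal P'_r,\mathcal B_r,\mathcal T^{(1)}_r,\mathcal T^{(2)}_r,\mathcal T^{(3)}_r,\mathcal D_r,\mathcal D'_r$ with $r\geq2$, with no sporadic cases. It counts types in each, using that an isomorphism of type skeletons preserves the number of types.

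The uniform argument you want to try first is wrong as stated. Since you never carry out the case check, it is also the only concrete counting in your proposal. You add at most two types from within-block pairs to at most two from cross pairs (``one type pair, or two self-reverse types''). But cross pairs alone can realize all four types. Take the parallel alternation $\pi=246\cdots(2m)135\cdots(2m-1)$ with $m\geq2$, whose structure $R(\pi)$ is critically indecomposable, and split it into its two halves of positions. Let $a_i$ be position $i$ and $b_j$ position $m+j$. The pair $(a_i,b_j)$ has type $(<,<)$ if $i<j$ and $(<,>)$ if $i\geq j$, since $2i<2j-1$ exactly when $i<j$. The reversed pairs have types $(>,>)$ and $(>,<)$. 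So the cross pairs already exhaust all four types. The total stays at four only because the within-block types $(<,<)$ and $(>,>)$ coincide with cross types, and your additive accounting does not capture this.

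The correct uniform reason, which the paper uses, concerns the relations of each structure rather than a block decomposition:
\begin{itemize}
  \item one symmetric relation, or a tournament, gives at most two types;
  \item a single antisymmetric relation ($\mathcal P_r,\mathcal P'_r,\mathcal D_r$) gives at most three, since a pair is related one way, the other way, or not at all;
  \item in $\mathcal B_r$ and $\mathcal D'_r$, two relations relate every pair by exactly one of them in exactly one direction, giving four.
\end{itemize}
With that observation replacing your block-based bookkeeping, the inspection you describe goes through.
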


\begin{proof}
   The type skeleton of a critically indecomposable structure
   is, for some $r\geq2$, the type skeleton of one of the nine
   structures listed by Schmerl and Trotter~\cite[Section~4]{SchTro93}:
\[
\mathcal G_r, \mathcal P_r, \mathcal P'_r,
\mathcal B_r, \mathcal T^{(1)}_r, \mathcal T^{(2)}_r,
\mathcal T^{(3)}_r, \mathcal D_r, \mathcal D'_r.
\]
  The graph $\mathcal G_r$ carries one symmetric relation, so a pair is
  adjacent or not: two types.  The tournaments $\mathcal T^{(i)}_r$ carry one
  relation holding in exactly one direction on each pair: two types.  The
  posets $\mathcal P_r$ and $\mathcal P'_r$ and the oriented graph
  $\mathcal D_r$ carry one antisymmetric relation, so a pair is related one
  way, the other way, or not at all: three types.  In
  $\mathcal B_r=(V_r;P_r,P'_r)$ the two orders divide the pairs between them,
  $P_r$ taking the $(a_i,b_j)$ with $i\geq j$ and $P'_r$ the remaining
  $(a_i,b_j)$ together with the $(a_i,a_j)$ and the $(b_i,b_j)$; each pair is
  therefore related by exactly one of the two, in exactly one direction: four types.  The same holds in $\mathcal D'_r=(T^{(2)}_r;F_r,F'_r)$, where
  $F_r$ and $F'_r$ partition the arcs of the tournament $\mathcal T^{(2)}_r$.
  An isomorphism of type skeletons preserves the number of types, so the bound
  passes to every critically indecomposable type skeleton.
\end{proof}

\begin{lemma}\label{six-types}
  Let $\omega$ be a primitive Cayley permutation with a repeated value.  Then $R(\omega)$
  has six types.
\end{lemma}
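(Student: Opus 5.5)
The plan is to exhibit ordered pairs realizing each of the six types $A,\dots,F$. First, two observations reduce the count. The type of $(j,i)$ is the reverse of the type of $(i,j)$: $A\leftrightarrow F$, $B\leftrightarrow E$, $C\leftrightarrow D$. So it suffices to find pairs $(i,j)$ with $i<j$ realizing $A$, $B$ and $C$, meaning $\omega(i)<\omega(j)$, $\omega(i)=\omega(j)$ and $\omega(i)>\omega(j)$ respectively. Second, types are defined by the relations $<_1,<_2$ on pairs of positions, so they are exactly these six sign patterns and no coarser identification occurs.

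Type $B$ comes directly from the hypothesis. Since $\omega$ has a repeated value, there are positions $i<j$ with $\omega(i)=\omega(j)=v$.

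Types $A$ and $C$ come from primitivity. Choose the repeated pair so that $j-i$ is minimal among pairs of equal letters with value $v$, or simply take any such pair. Since $\omega$ is primitive, $j\ne i+1$ is forced, because $w_i\ne w_{i+1}$. So the position $k=i+1$ satisfies $i<k<j$ and $\omega(k)\neq v$.
\begin{itemize}
\item If $\omega(k)>v$, then $(i,k)$ has type $A$ and $(k,j)$ has type $C$.
\item If $\omega(k)<v$, then $(i,k)$ has type $C$ and $(k,j)$ has type $A$.
\end{itemize}
Either way, $A$, $B$ and $C$ all occur, and taking reverses gives $D$, $E$ and $F$.

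I don't expect a real obstacle here. The only point needing care is where primitivity enters: it guarantees that the two equal letters are not adjacent, so some letter with a different value lies strictly between them. This is also where the hypothesis is needed. The word $11$ has only types $B$ and $E$, and it fails primitivity.
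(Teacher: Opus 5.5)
Your proof is correct. The overall framework is the same as the paper's: reduce to pairs $i<j$, classify them by the three value comparisons, obtain the other three types by reversal, and observe that the six sign patterns are distinct.

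The difference is in how you obtain the strict comparisons. The paper argues globally and by contradiction. If no pair $i<j$ had $\omega(i)<\omega(j)$, then $\omega$ would be weakly decreasing, hence strictly decreasing by primitivity, contradicting the repeated value. The symmetric argument supplies a pair with $\omega(i)>\omega(j)$. You instead construct explicit witnesses from the letter immediately after the first copy of the repeated value.

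What your route buys is locality. The three positions $i$, $i+1$, $j$ already realize the types $A$, $B$, $C$. Equivalently, $\st(\omega|_{\{i,i+1,j\}})$ is $121$ or $212$, so every primitive Cayley permutation with a repeated value contains one of these two patterns. It also makes the single use of primitivity, $w_{i+1}\neq w_i$, completely transparent. The paper's argument is equally short but less explicit about where the witnesses sit.

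A minor remark: the hedge about choosing $j-i$ minimal is unnecessary. The inequality $w_{i+1}\neq w_i$ by itself forces both $j\geq i+2$ and $\omega(i+1)\neq v$ for any pair of equal letters.
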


\begin{proof}
  For $i<j$, the type of $(i,j)$ is determined by whether
  $\omega(i)<\omega(j)$, $\omega(i)=\omega(j)$, or $\omega(i)>\omega(j)$; reversing the ordered pair gives three
  corresponding types with the opposite position order. All three value
  comparisons occur. A repeated value supplies the equality case. If no pair
  satisfied $\omega(i)<\omega(j)$, then $\omega$ would be weakly decreasing and, by
  primitivity, strictly decreasing, contrary to the repeated value. The same
  argument with the order reversed supplies a pair with $\omega(i)>\omega(j)$. Thus all
  six types occur, and they are distinct.
\end{proof}

\begin{theorem}\label{exceptional}
  The exceptional Cayley permutations are exactly the simple parallel
  alternations and their symmetries.
\end{theorem}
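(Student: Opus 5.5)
We plan to reduce the statement to the permutation case and then quote Albert and Atkinson. Let $\omega\in\Cay_n$ be exceptional. As noted after the definition, $\omega$ is primitive and $n\geq4$, so Theorem~\ref{indecomposable} makes $R(\omega)$ indecomposable.

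The first step is to check that $R(\omega)$ is critically indecomposable. For each $i$, the induced substructure on $[n]\setminus\{i\}$ is isomorphic to $R(\omega\setminus i)$, because standardization preserves both orders. This structure has $n-1\geq3$ points. If it were indecomposable, Theorem~\ref{indecomposable} would make $\omega\setminus i$ simple, which is impossible since $\omega$ is exceptional. So no one-point deletion of $R(\omega)$ is indecomposable.

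The second step shows that $\omega$ has no repeated value. By Lemma~\ref{four-types}, the type skeleton of $R(\omega)$ has at most four types. If $\omega$ had a repeated value, Lemma~\ref{six-types} would give six types, since $\omega$ is primitive. Hence $\omega$ is a permutation. Its simplicity as a Cayley permutation is then ordinary simplicity, because H-intervals of a permutation are its ordinary intervals. Likewise, each $\omega\setminus i$ is a permutation, and simplicity of a permutation in the Cayley sense agrees with the usual notion; here one uses that $11$ is not a permutation and that words of length $\leq2$ are always simple. So $\omega$ is an exceptional permutation in the classical sense. By Albert and Atkinson~\cite{AlbAtk05}, $\omega$ is then a simple parallel alternation $246\cdots(2m)135\cdots(2m-1)$ with $m\geq2$, or one of its symmetries.

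The converse needs three checks.
\begin{itemize}
\item Each simple parallel alternation, and each of its symmetries under reverse, complement and inverse, is a simple permutation. It is therefore a simple Cayley permutation of length $2m\geq4$.
\item Each one-point deletion is again a permutation. By Albert and Atkinson, it is not simple as a permutation.
\item A permutation of length $\geq3$ that is not simple as a permutation is not simple as a Cayley permutation. Here we use that primitivity is automatic for permutations, so simplicity reduces to semisimplicity, which coincides with ordinary simplicity.
\end{itemize}
Hence these words are exceptional Cayley permutations.

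The step needing most care is the first, critical indecomposability. It requires the remark, already used in the proof of Theorem~\ref{deletion}, that induced substructures correspond to standardized subwords, together with the condition $n-1\geq3$ needed to apply Theorem~\ref{indecomposable}. The step is fine because $n\geq4$. Everything else is bookkeeping that identifies the Cayley and permutation notions of simplicity on permutations.
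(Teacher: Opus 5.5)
Your proof is correct and follows the paper's argument step for step. You show $R(\omega)$ is critically indecomposable via the one-point deletions, rule out repeated values by the clash between Lemma~\ref{four-types} and Lemma~\ref{six-types}, and reduce to Albert and Atkinson's classification, with the converse handled the same way; the only differences are cosmetic, namely that you establish critical indecomposability before supposing a repeated value and spell out more explicitly why Cayley simplicity and classical simplicity agree on permutations.
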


\begin{proof}
  Let $\omega\in\Cay_n$ be exceptional.  Since every Cayley permutation of length at
  most two is simple, every one-point deletion of a word of length two or three
  is simple. Thus $n\geq4$, and $\omega$ is primitive by definition.  Suppose $\omega$ had a
  repeated value.  By
  Theorem~\ref{indecomposable}, $R(\omega)$ is indecomposable, and it is
  critically indecomposable.  Indeed, for each $j\in[n]$, its one-point
  deletion at $j$ is isomorphic to $R(\omega\setminus j)$, which has $n-1\geq3$
  points.  By Theorem~\ref{indecomposable}, this structure is
  indecomposable only if $\omega\setminus j$ is simple, and no such deletion is
  simple.  Lemma~\ref{four-types} then allows $R(\omega)$ at most four types, while
  Lemma~\ref{six-types} gives it six.
  So $\omega$ has no repeated value.  Being a permutation, $\omega$ is exceptional in the
  classical sense, hence a simple parallel alternation or one of its
  symmetries.  Conversely, the simple parallel alternations and their
  symmetries are permutations with no simple one-point deletion, and so are
  exceptional here.
\end{proof}

\begin{corollary}\label{simple-repeated-deletion}
  A simple Cayley permutation with a repeated value has a simple
  one-point deletion. The exceptional semisimple Cayley permutations
  of length at least two,
  defined by the absence of a semisimple one-point deletion, are also
  exactly the simple parallel alternations and their symmetries.
\end{corollary}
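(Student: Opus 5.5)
The first assertion follows directly from Theorem~\ref{exceptional}. Let $\omega$ be simple with a repeated value. If $\omega=11$, both one-point deletions give $1$, which is simple. Otherwise $\omega$ is semisimple and primitive, and its length is at least three, since $11$ is the only Cayley permutation of length two with a repeated value. If $\omega$ had no simple one-point deletion, it would be exceptional in the sense defined above. Theorem~\ref{exceptional} would then make $\omega$ a parallel alternation or a symmetry of one, hence a permutation, which contradicts the repeated value.

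For the second assertion I will separate the non-primitive and primitive cases. Let $\omega\in\Cay_n$ with $n\geq2$ be semisimple.

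\textbf{Non-primitive case.} If $\omega$ has a flat step $w_i=w_{i+1}$, then Proposition~\ref{flat-deletion} shows that $\omega\setminus(i+1)$ is semisimple. So $\omega$ is not exceptional in the semisimple sense.

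\textbf{Primitive case.} If $\omega$ is primitive, then $\omega$ is simple. Every simple word is semisimple: for $11$ this is immediate, and otherwise it holds by definition. Hence any simple one-point deletion is also a semisimple one-point deletion. If $\omega$ is not exceptional as a simple word, it therefore has a semisimple one-point deletion. If $\omega$ is exceptional, Theorem~\ref{exceptional} says it is a simple parallel alternation or a symmetry of one. Thus every semisimple-exceptional word lies in that family.

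For the converse, take a parallel alternation $\pi$ or a symmetry of one, of length $n\geq4$. The claim is that no one-point deletion $\pi\setminus i$ is semisimple. Since $\pi$ is a permutation, each $\pi\setminus i$ is a permutation, and is therefore primitive. For a primitive word, semisimplicity coincides with simplicity, so a semisimple deletion would be a simple deletion. That contradicts $\pi$ being exceptional in the classical sense, as recorded in Theorem~\ref{exceptional}. Finally, $\pi$ itself is semisimple because it is simple.

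The only care needed is bookkeeping around the word $11$. It is simple but not semisimple-primitive in the usual way, and its deletion $1$ is semisimple, so it is not exceptional in either sense. I expect no real obstacle here.
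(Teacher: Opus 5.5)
Your proof is correct and follows the same route as the paper. You use Theorem~\ref{exceptional} for the first assertion, Proposition~\ref{flat-deletion} to dispose of semisimple words with a flat step, and Theorem~\ref{exceptional} again in the primitive case; the converse rests on the two notions of simplicity agreeing for permutations, and your separate treatment of $11$ is harmless but not needed.
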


\begin{proof}
  The first assertion follows from Theorem~\ref{exceptional}.
  For the second, Proposition~\ref{flat-deletion} excludes every
  semisimple word with a flat step. Any remaining exceptional
  semisimple word is simple and has no simple one-point deletion, so
  Theorem~\ref{exceptional} applies. For permutations the two notions
  of simplicity agree.
\end{proof}

Two further consequences of Schmerl and Trotter's results~\cite[Theorems~2.2 and~5.9]{SchTro93} are useful
when retaining a specified pattern during deletion or extension.

\begin{corollary}
\label{simple-extension}
  Every simple Cayley permutation of length $n\geq6$ has a simple
  pattern of length $n-2$. Moreover, if a set of $k$ positions in a
  simple Cayley permutation of length $n$ induces a simple pattern,
  where $3\leq k\leq n-2$, then it can be extended by two positions to
  induce a simple pattern of length $k+2$.
\end{corollary}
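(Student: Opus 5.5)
The plan is to transfer both assertions to relational structures through Theorem~\ref{indecomposable}, invoke Schmerl and Trotter~\cite[Theorems~2.2 and~5.9]{SchTro93}, and transfer back. The translation step is the one used in the proof of Theorem~\ref{deletion}. For any set $S$ of positions, the substructure of $R(\omega)$ induced on $S$ is isomorphic to $R(\st(\omega|_S))$, because standardization preserves both the position order and the value order. When $|S|\geq3$, Theorem~\ref{indecomposable} then says this substructure is indecomposable exactly when $\st(\omega|_S)$ is simple. For $|S|\le 2$ the equivalence can fail; for example, $R(11)$ is indecomposable trivially but so is $R(1)$. This is why the hypotheses $n\geq6$ and $k\geq3$ appear, and I will keep every structure I pass through at size at least three.

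For the second assertion, let $\omega$ be simple of length $n$ and let $S$ with $|S|=k$, $3\leq k\leq n-2$, induce a simple pattern.
\begin{itemize}
\item By Theorem~\ref{indecomposable}, $R(\omega)$ is indecomposable, and by the translation above the induced substructure $R(\omega)|_S$ is indecomposable.
\item Schmerl and Trotter's Theorem~2.2 states that if an indecomposable structure $\mathbf A$ with binary irreflexive relations has an indecomposable substructure $\mathbf B$ with $3\leq|\mathbf B|\leq|\mathbf A|-2$, then $\mathbf B$ extends to an indecomposable substructure on $|\mathbf B|+2$ points. I will check that $R(\omega)$ meets its hypotheses, namely binary irreflexive relations; this was already used in Theorem~\ref{deletion}.
\item This gives $S'\supseteq S$ with $|S'|=k+2\geq5$ and $R(\omega)|_{S'}$ indecomposable. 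Translating back, $\st(\omega|_{S'})$ is simple.
\end{itemize}

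For the first assertion, take $n\geq6$. Theorem~5.9 of Schmerl and Trotter says that an indecomposable structure of size at least seven has an indecomposable substructure on $n-2$ points. If the cited statement instead requires $n\geq7$, I will handle $n=6$ separately in two cases.
\begin{itemize}
\item If $\omega$ is not exceptional, Theorem~\ref{exceptional} gives a simple deletion $\omega\setminus i$ of length $5$. Since $5\geq4$, this is again simple of length at least two. By Theorem~\ref{exceptional} once more, either $\omega\setminus i$ has a simple one-point deletion of length $4$, or it is a parallel alternation. Parallel alternations have even length, so the latter is impossible at length $5$.
\item If $\omega$ is exceptional, it is a parallel alternation of length $6$, such as $246135$, together with its symmetries. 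Here I will exhibit directly that deleting a suitable pair, for instance one letter from each half, leaves $2413$ or $3142$.
\end{itemize}
More generally, this iteration works for any $n\geq6$. Non-exceptional words delete once to length $n-1\geq5$ and then once more, using parity at odd length. For exceptional words, the parallel alternation of length $2m$ contains the parallel alternation of length $2m-2$, obtained by deleting the last letter of each half. That pattern is simple when $2m-2\geq4$, which holds since $n\geq6$.

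The main obstacle is making sure the cited theorems are stated for structures with binary irreflexive relations, and confirming their exact size thresholds. The fallback argument above through Theorem~\ref{exceptional} avoids relying on Theorem~5.9 for the first assertion.
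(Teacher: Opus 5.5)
Your main line is the paper's proof: Schmerl and Trotter's Theorem~2.2, transported through Theorem~\ref{indecomposable}, for the extension assertion; their Theorem~5.9 for the first assertion when $n\geq7$; and a separate argument at $n=6$. Your $n=6$ argument is correct. The paper is slightly quicker there: it applies Theorem~\ref{deletion} to get a simple pattern of length $4$ or $5$, and uses parity in the latter case, so exceptional $\omega$ need no separate treatment.

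What does not hold up is the paragraph claiming that the iteration works for every $n\geq6$. With it goes your closing claim that the fallback makes Theorem~5.9 unnecessary.

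\begin{itemize}
\item Parity only helps when the intermediate word $\omega\setminus i$ has odd length, that is, when $n$ is even.
\item For odd $n\geq7$, $\omega$ is never exceptional. But its simple deletion $\omega\setminus i$ has even length $n-1\geq6$, and nothing in your argument prevents it from being a simple parallel alternation or a symmetry of one.
\item Such a word has no simple one-point deletion. Removing the last letter of each half only reaches length $n-3$.
\item To continue, you would have to show that some \emph{other} simple one-point deletion of $\omega$ is non-exceptional, and your argument does not address this.
\end{itemize}

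So the fallback proves the first assertion only for even $n$. For odd $n\geq7$ you do need Theorem~5.9, as the paper uses it.

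A smaller point: the thresholds are not artifacts of the translation. Theorem~\ref{indecomposable} holds at every length; your $R(11)$, $R(1)$ example exhibits no failure, since $11$ and $1$ are both simple. Both hypotheses are genuinely needed. The permutation $24153$ has no simple pattern of length $3$, which rules out $n=5$. Its positions $\{1,4\}$ induce the simple pattern $12$ but lie in no simple pattern of length $4$, which rules out $k=2$.
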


\begin{proof}
  Let $\omega$ be a simple Cayley permutation. For the first
  assertion with $n\geq7$ and for the second assertion, apply the cited
  results to $R(\omega)$ and use
  Theorem~\ref{indecomposable}. Induced substructures correspond
  to standardized subwords, as in the proof of Theorem~\ref{deletion}.
  If $n=6$, then Theorem~\ref{deletion} gives a simple pattern of length
  four or five. In the latter case, the five-letter pattern is not
  exceptional by Theorem~\ref{exceptional}, since every parallel
  alternation has even length. It therefore has a simple one-point
  deletion of length four, which is also a pattern of $\omega$.
\end{proof}

Since the simple permutation $24153$ has no simple pattern of length
three the first assertion of the above corollary fails for $n=5$.

\section{Substitution-closed classes with finitely many simples}\label{substitution-closed-classes}

Theorem~\ref{deletion} and Corollary~\ref{semisimple-deletion} supply
finite certificates for the finiteness of the sets of simple and semisimple members.

\begin{corollary}\label{two-lengths}
  Let $\C$ be a class of Cayley permutations and $n\geq1$. If $\C$ has
  no simple member of length $n$ or $n+1$, then it has no simple member
  of length greater than $n$. The same assertion holds with semisimple
  in place of simple.
\end{corollary}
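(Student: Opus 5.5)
We argue by contradiction, using the hereditary property of $\C$ together with Theorem~\ref{deletion} for the simple case and Corollary~\ref{semisimple-deletion} for the semisimple case. Suppose $\C$ has no simple member of length $n$ or $n+1$, but does have a simple member of some length greater than $n$. Among all such members, choose $\omega\in\C$ of minimal length $m>n$. By hypothesis $m\neq n+1$, so $m\geq n+2\geq 3$.

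Since $m\geq2$, Theorem~\ref{deletion} yields a simple pattern $\tau\leq\omega$ of length $m-1$ or $m-2$. Because $\C$ is an order ideal, $\tau\in\C$. Its length satisfies $m-2\geq n$, so it is at least $n$.

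If the length of $\tau$ is exactly $n$, this contradicts the assumption that $\C$ has no simple member of length $n$. Otherwise the length lies strictly between $n$ and $m$, which contradicts the minimality of $m$. Hence no such $\omega$ exists.

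The semisimple assertion follows by the identical argument, with Corollary~\ref{semisimple-deletion} replacing Theorem~\ref{deletion}. That corollary likewise produces a semisimple pattern of length $m-1$ or $m-2$ for every semisimple word of length $m\geq2$.

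There is no real obstacle here. The only point to check is that $m\geq n+2$ keeps $\tau$ at length at least $n$, so $\tau$ cannot slip below the forbidden window $\{n,n+1\}$. The hypothesis $n\geq1$ ensures $m\geq2$, which is what the deletion results require.
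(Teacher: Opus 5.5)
Your proof is correct and follows essentially the same route as the paper: take a simple (respectively semisimple) member of least length $m>n$, note $m\geq n+2$, apply Theorem~\ref{deletion} (respectively Corollary~\ref{semisimple-deletion}) to get a pattern of length $m-1$ or $m-2$ lying in $\C$, and observe that its length is at least $n$ and less than $m$, which contradicts either the hypothesis or the minimality of $m$.
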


\begin{proof}
  Suppose, to the contrary, that $\C$ has a simple member of length
  greater than $n$, and choose one, $\omega$, of least length $\ell$.  By
  hypothesis, $\ell\geq n+2$.  By Theorem~\ref{deletion} there is a
  simple pattern $\nu\leq \omega$ of length $\ell-1$ or $\ell-2$. Note that
  $\nu\in\C$, because $\C$ is a class.  Thus $n\leq|\nu|<\ell$.  If $|\nu|$
  is $n$ or $n+1$, this contradicts the hypothesis.  If $|\nu|>n+1$, it
  contradicts the minimality of $\ell$. For semisimple members,
  the same argument uses Corollary~\ref{semisimple-deletion}.
\end{proof}

A class therefore has finitely many simple members if and only if two
consecutive lengths lack them. If membership in $\C$ is decidable, for
example from a finite basis, we can find all its simple members by searching
successive lengths and stopping when two consecutive lengths have none.
The search terminates whenever the set of simple members is finite; it
does not decide whether that set is finite. The same holds
for semisimple members.

We next recall inflation and the two substitution decompositions from
the companion paper~\cite{cerbai-claesson-simple}.
The inflation of Cayley permutations is defined analogously to how it is
defined for permutations,
but only letters appearing exactly once may be inflated by
nonconstant words. Otherwise,
positions carrying the same value could be expanded into
competing value bands, making the process ambiguous.
Formally, given $\sigma\in\Cay_k$
we associate a nonempty Cayley permutation
$\alpha_i$ with each position $i$, subject to the restriction
\begin{equation}\label{eq:constant-components}
  |\sigma^{-1}(\sigma(i))|>1
  \quad\Longrightarrow\quad \alpha_i=1^{r_i}\text{ for some }r_i\geq1.
\end{equation}
The \emph{inflation} $\sigma[\alpha_1,\ldots,\alpha_k]$ is the
Cayley permutation obtained by replacing each entry $\sigma(i)$ with
a factor order isomorphic to $\alpha_i$, while preserving the relative order of
the entries of $\sigma$. A singleton quotient fibre is replaced by a consecutive
value band; a repeated fibre is replaced by a single value.
The positional interval occupied by the entries
corresponding to $\alpha_i$ is called the $i$th \emph{part} of the inflation.

The decomposition into H-intervals must satisfy the stronger restriction
\begin{equation}\label{eq:repeat-restriction}
  |\sigma^{-1}(\sigma(i))|>1 \quad\Longrightarrow\quad \alpha_i=1.
\end{equation}
In this case, every nonsingleton part is an H-interval.
The inflations
\[
  1121=121[11,1,1],\qquad 111=11[1,11]
\]
satisfy~\eqref{eq:constant-components}, but not
\eqref{eq:repeat-restriction}.

An \emph{H-interval decomposition} of $\omega$ is a partition of its positions
into consecutive intervals, each of which is an H-interval or a single
position. Contracting its parts and standardizing gives a Cayley
permutation, called the \emph{quotient}.

Direct and skew sums are defined by
\[
  \alpha\oplus\beta=12[\alpha,\beta]
  \quad\text{and}\quad
  \alpha\ominus\beta=21[\alpha,\beta].
\]
A Cayley permutation that cannot be expressed as a direct sum of
shorter Cayley permutations is \emph{plus indecomposable}.
Analogously, it is \emph{minus indecomposable} if it cannot be
expressed as a skew sum of shorter Cayley permutations.

\begin{theorem}[\cite{cerbai-claesson-simple}]\label{substitution-thm}
  Let $\omega$ be a Cayley permutation of length at least two.
  Then there is a unique semisimple Cayley permutation $\sigma$ of length
  $k\ge 2$ such that
  \[
    \omega=\sigma[\alpha_1,\ldots,\alpha_k],
  \]
  for some nonempty Cayley permutations $\alpha_1,\ldots,\alpha_k$
  satisfying~\eqref{eq:repeat-restriction}.
  If the quotient $\sigma\notin\{12,21\}$, then the components
  $\alpha_i$ are uniquely determined.
  If $\sigma=12$, they are unique if we require $\alpha_1$ to be
  plus indecomposable. If $\sigma=21$, they are unique if we require
  $\alpha_1$ to be minus indecomposable.
\end{theorem}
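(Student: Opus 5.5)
Although Theorem~\ref{substitution-thm} is cited from the companion paper, here is how I would prove it from scratch, working with H-intervals and mimicking the classical argument for permutations.

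\emph{Key lemma on H-intervals.} Let $I,I'$ be H-intervals of $\omega$ with value bands $J,J'$. If $I\cap I'\neq\emptyset$, then $I\cup I'$ and $I\cap I'$ are again H-intervals. If neither contains the other, then $I\setminus I'$ and $I'\setminus I$ are H-intervals as well.

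For the union:
\begin{itemize}
\item positions: $I\cup I'$ is a positional interval because $I$ and $I'$ meet;
\item values: $J\cup J'$ is a value interval because $\omega(I\cap I')\subseteq J\cap J'$ is nonempty;
\item saturation: $\omega^{-1}(J\cup J')=I\cup I'$.
\end{itemize}
For the intersection, saturation gives $\omega^{-1}(J\cap J')=I\cap I'$, hence $\omega(I\cap I')=J\cap J'$, an interval. The differences are handled similarly.

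\emph{Existence.} I split into three cases according to which maximal proper H-intervals exist.

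\emph{Case 1: two overlapping maximal proper H-intervals.} By the lemma their union is $[n]$. In that case $\omega$ is a direct or skew sum. I take $\alpha_1$ to be the first summand in the finest plus (or minus) decomposition. This yields quotient $12$ (or $21$) and the required normalization.

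\emph{Case 2: the maximal proper H-intervals are pairwise disjoint.} I form the partition of $[n]$ into these maximal H-intervals together with the singleton positions they do not cover. Each uncovered position $i$ satisfies~\eqref{eq:repeat-restriction} automatically: its component is the single letter~$1$.

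The subtle point is that an uncovered position may carry a value shared with other positions. It then cannot be inflated, which is exactly what~\eqref{eq:repeat-restriction} demands. Every nonsingleton part is saturated, so its value band is disjoint from the values of the other parts. Therefore contracting parts gives a well-defined quotient $\sigma$.

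To see that $\sigma$ is semisimple, suppose $\sigma$ had a nontrivial H-interval $K$. Then the union of the corresponding parts would be an H-interval of $\omega$: saturation lifts because repeated values of $\sigma$ correspond exactly to repeated singleton positions in $\omega$. This union is either a proper H-interval strictly containing a maximal one, or a new one covering several singletons. Both contradict maximality.

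\emph{Case 3: no nontrivial H-intervals.} Then $\omega$ is itself semisimple, with quotient $\omega$ and all components equal to~$1$.

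\emph{Uniqueness.} Take any decomposition $\omega=\sigma[\alpha_1,\ldots,\alpha_k]$ with $\sigma$ semisimple, $k\geq2$, and~\eqref{eq:repeat-restriction}.
\begin{itemize}
\item Every nonsingleton part is an H-interval of $\omega$.
\item Conversely, any proper H-interval $L$ of $\omega$ either lies inside a single part, or its image in $\sigma$ is an H-interval of $\sigma$ with at least two elements.
\end{itemize}
If $\sigma\notin\{12,21\}$, this image must be all of $[k]$. I would argue that this forces $L=[n]$ unless $\sigma$ is a sum. That argument uses semisimplicity plus the fact that a union of parts that omits part of a boundary part still gives an H-interval of $\sigma$ after a trimming argument.

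It follows that the parts are exactly the maximal proper H-intervals and the remaining singletons. This determines $\sigma$ and the $\alpha_i$. For $\sigma\in\{12,21\}$, the plus/minus indecomposability of $\alpha_1$ pins down the first cut, as for permutations.

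\emph{Main obstacle.} I expect the hardest step to be the uniqueness claim that an H-interval crossing part boundaries forces $\sigma\in\{12,21\}$. The trimming step must respect repeated values: a part holding a repeated value is a single position, which keeps this manageable. There I would lean on Lemma~\ref{modules}, translating to modules of $R(\omega)$ and using the standard module-decomposition uniqueness (Gallai) for $2$-structures. Constant positional intervals that are not H-intervals have to be excluded separately via~\eqref{eq:repeat-restriction}.
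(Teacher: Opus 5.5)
The paper does not prove Theorem~\ref{substitution-thm}. It imports the result from the companion paper, so there is no in-text argument to compare yours with. Judged on its own, your outline is the right one and its ingredients hold:
\begin{itemize}
\item the lattice lemma is correct (for the difference clause, $J'\subseteq J$ would force $I'=\omega^{-1}(J')\subseteq I$, so $J\setminus J'$ is an interval);
\item Case~1 does yield a sum, because $I\setminus I'$ and $I'$ are H-intervals with complementary value bands;
\item the Case~2 quotient is well defined and semisimple;
\item both uniqueness bullets are true.
\end{itemize}

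The genuine hole is the step you defer, and it needs neither Gallai nor any new idea. Suppose $\sigma\notin\{12,21\}$ and a proper H-interval $L$ meets every part. As $L\neq[n]$, it omits a position of $P_1$ or of $P_k$, say of $P_1$. Since $L\cap P_1\neq\emptyset$, the part $P_1$ is not a singleton, so it is an H-interval. Then $L$ and $P_1$ overlap with neither containing the other, and your difference clause makes $L\setminus P_1$ an H-interval meeting $P_2,\ldots,P_k$.
\begin{itemize}
\item For $k\geq3$, its image $\{2,\ldots,k\}$ is an H-interval of $\sigma$ with $k-1$ elements, where $2\leq k-1<k$. This contradicts semisimplicity.
\item For $k=2$, the quotient must be $11$, so $\omega=11$ and there is nothing to prove.
\end{itemize}
Hence every proper H-interval lies in a single part, and your identification of the parts with the maximal proper H-intervals and the uncovered singletons follows.

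You also owe a line showing that the \emph{quotient}, not just the components, is unique when $\omega$ is a sum. No word is both a direct and a skew sum (compare its first and last letters). If $\omega=\alpha\oplus\beta$ also had a decomposition with quotient outside $\{12,21\}$, the two summand intervals would be proper H-intervals covering $[n]$. By the previous paragraph each lies in a single part, so they are the parts and the quotient is $12$ after all.

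Finally, the Gallai fallback would not work as stated. Modular decomposition of $R(\omega)$ produces the simple quotient of Theorem~\ref{module-substitution-thm}, as in $1121=121[11,1,1]$, whereas $1121$ is semisimple and is its own semisimple quotient. Your direct H-interval argument avoids having to undo those constant-run modules.
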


\begin{theorem}[\cite{cerbai-claesson-simple}]\label{module-substitution-thm}
  Every Cayley permutation $\omega$ of length at least two has a unique
  simple quotient $\sigma$ of length $k\geq2$ such that
  \[
    \omega=\sigma[\alpha_1,\ldots,\alpha_k]
  \]
  for nonempty components satisfying~\eqref{eq:constant-components}.
  If $\sigma\notin\{11,12,21\}$, the components are unique. For $\sigma=11$
  they are unique on requiring $\alpha_1=1$; for $\sigma=12$ or $21$ they
  are unique on requiring $\alpha_1$ to be plus or minus indecomposable,
  respectively.
\end{theorem}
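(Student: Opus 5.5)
We prove Theorem~\ref{module-substitution-thm} by passing to the relational structure $R(\omega)$ and using Lemma~\ref{modules} and Theorem~\ref{indecomposable}, following the usual modular-decomposition argument (Gallai's theorem). It could also be derived from Theorem~\ref{substitution-thm}, but the direct route is cleaner.

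\emph{Existence.} First suppose $\omega$ is constant, $\omega=1^n$ with $n\ge2$. Then $\omega=11[1,1^{n-1}]$, which satisfies~\eqref{eq:constant-components} with $\alpha_1=1$.

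Otherwise, consider the maximal proper modules of $R(\omega)$. By Lemma~\ref{modules}, every module is a positional interval: either an H-interval or a constant run. There are two cases, as in Gallai's theorem.
\begin{itemize}
\item If the maximal proper modules are pairwise disjoint, they partition $[n]$ into consecutive intervals. Contracting each to a point gives a quotient $\sigma$. The quotient structure is $R(\sigma)$, and maximality makes it indecomposable, so $\sigma$ is simple by Theorem~\ref{indecomposable}. Each part is an H-interval or a constant run. A constant run whose value recurs in $\omega$ is exactly a part whose quotient value is repeated, and in that case the component is $1^{r_i}$. So~\eqref{eq:constant-components} holds.
\item If two maximal proper modules overlap, their union is all of $[n]$. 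The standard argument then shows the structure is a direct sum, a skew sum, or a constant-run sum, giving $\sigma\in\{12,21,11\}$. Since $\omega$ is not constant, the last option cannot arise with both parts constant; a case $11$ forces $\omega$ constant, which is already handled. For $12$ and $21$ we take $\alpha_1$ to be the first plus (respectively minus) indecomposable block. This is the usual sum decomposition, and it exists because any proper sum splitting is a module.
\end{itemize}

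\emph{Uniqueness.} Suppose $\omega=\sigma[\alpha_1,\ldots,\alpha_k]$ with $\sigma$ simple and $k\ge2$. Every part is then a module of $R(\omega)$.

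If $\sigma\notin\{11,12,21\}$, then $R(\sigma)$ is indecomposable with at least three points. Any module of $R(\omega)$ is either contained in a single part or is a union of parts forming a module of $R(\sigma)$. Hence the parts are exactly the maximal proper modules, and both $\sigma$ and the $\alpha_i$ are determined.

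For $\sigma=12$ or $21$, the plus or minus indecomposability of $\alpha_1$ pins down the first block, as in the permutation case.

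For $\sigma=11$, the decomposition forces $\omega$ to be constant. Requiring $\alpha_1=1$ then fixes $\alpha_2=1^{n-1}$.

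Finally, the quotients from different cases cannot coincide. If $\omega$ is constant, no simple $\sigma$ of length at least three admits $\omega$ as an inflation, since that would make $\sigma$ constant. Likewise $12$ and $21$ cannot both occur for the same $\omega$.

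The main obstacle is the overlapping-modules step: showing that overlapping maximal modules force a $12$, $21$, or $11$ quotient in this two-relation setting, where the value order is only a weak order. The plan is to check directly that the union and the differences of two overlapping interval modules are again modules, which is routine for positional intervals. Then examine the value relation between the two differences: it is all $<$, all $>$, or all $=$, and these give $12$, $21$, and $11$ respectively.
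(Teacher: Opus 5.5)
The paper gives no proof of this theorem; it is quoted from the companion paper~\cite{cerbai-claesson-simple}. The nearest thing to a derivation here is the $231146561$ example, where the simple quotient $2131$ is obtained by contracting the constant run of the semisimple quotient $21131$ from Theorem~\ref{substitution-thm}. That points to the route you set aside, which has two steps:
\begin{itemize}
\item Existence: contract the maximal runs of the semisimple quotient. Proposition~\ref{flat-deletion} keeps the word semisimple. In a nonconstant semisimple quotient every run's value recurs outside the run, so \eqref{eq:constant-components} holds.
\item Uniqueness: blow each repeated letter of a simple $\sigma$ back up into a run, check that the blown-up word is semisimple, and invoke the uniqueness in Theorem~\ref{substitution-thm}.
\end{itemize}
You instead run Gallai's modular decomposition directly on $R(\omega)$ via Lemma~\ref{modules} and Theorem~\ref{indecomposable}. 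Your route is self-contained and independent of Theorem~\ref{substitution-thm}, and it makes the split between the degenerate quotients $11,12,21$ and prime quotients transparent. The price is redoing the overlap analysis and the rigidity of prime quotients by hand.

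Your argument is sound, and your plan for the overlap step works. Since $N=(M\cap N)\cup(N\setminus M)$ is a module disjoint from $M\setminus N$, the block $M\setminus N$ compares with $M\cap N$ exactly as it does with $N\setminus M$; symmetrically for $M$. So all three blocks are uniformly $<$, $>$ or $=$.

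Two steps should be written out.

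First, the rigidity claim for $\sigma\notin\{11,12,21\}$ is the heart of uniqueness and needs its short proof. If a module $X$ of $R(\omega)$ meets at least two parts, the parts it meets form a module of $R(\sigma)$ with at least two elements, hence all $k$ parts. Now suppose some $u$ in part $i$ lies outside $X$. Then $u$ stands in the same $<_1$- and $<_2$-relations, in both directions, to every element of $X$. Since $X$ meets every other part, part $i$ relates identically to all other parts. Hence $[k]\setminus\{i\}$ is a module of $R(\sigma)$ with $k-1\geq2$ elements, contradicting indecomposability.

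Second, your closing cross-case check omits the one case that actually needs this claim: a prime quotient against $12$ or $21$. It follows at once. The two parts of a $12$ or $21$ decomposition are complementary proper modules of $R(\omega)$, so each would have to lie inside a single one of the $k\geq3$ parts of the prime decomposition, which is impossible.
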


For $\omega=231146561$ from Section~\ref{deletions}, the
H-interval decomposition is
\[
  23\mid1\mid1\mid4656\mid1,
\]
which by contraction gives the semisimple quotient $21131$. Conversely,
$\omega$ is obtained as the inflation
\[
  231146561=21131[12,1,1,1323,1].
\]
Contracting the constant run in this quotient instead gives the simple
quotient $2131$ and the inflation
\[
  231146561=2131[12,11,1323,1].
\]

A class is \emph{substitution-closed} if
$\sigma[\alpha_1,\ldots,\alpha_k]$ lies in the class whenever $\sigma$ and
the $\alpha_i$ do. We also consider closure when only inflations
satisfying~\eqref{eq:repeat-restriction} are required to belong to the class.
Write $\cl{\C}$ for the substitution closure of $\C$, and
$\clH{\C}$ for its closure under this additional restriction.
Thus,
\[
  \C\subseteq\clH{\C}\subseteq\cl{\C}.
\]
These closures can differ. For instance, $\Av(111)$ is closed under the restricted
condition, whereas $11[11,1]=111$ prevents substitution closure.
That is,
\[
\Av(111)=\clH{\Av(111)}\subset\cl{\Av(111)}.
\]

Albert and Atkinson's~\cite{AlbAtk05} characterization of
substitution-closed permutation classes, also stated in
Brignall's~\cite{Bri10} survey, extends to both closure conditions.
We first describe how a classical pattern can occur in an inflation.

\begin{lemma}\label{patterns-through}
  Let $\omega=\sigma[\alpha_1,\ldots,\alpha_k]$ be an inflation and $\beta$ a
  nonempty Cayley permutation. Then $\beta\leq\omega$ if and only if
  \[
    \beta=\st(\sigma|_T)\bigl[\gamma_i:i\in T\bigr]
  \]
  for some $\emptyset\neq T\subseteq[k]$ and nonempty $\gamma_i\leq\alpha_i$.
  At repeated quotient letters the $\gamma_i$ are constant; if the original
  inflation satisfies~\eqref{eq:repeat-restriction}, they are $1$.
\end{lemma}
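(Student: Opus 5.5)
We prove the two directions separately. Throughout, write $P_i$ for the $i$th part of $\omega=\sigma[\alpha_1,\ldots,\alpha_k]$, a positional interval of length $|\alpha_i|$. Since inflation preserves the relative order of the entries of $\sigma$, every value of $P_i$ compares with every value of $P_j$ exactly as $\sigma(i)$ compares with $\sigma(j)$, for $i\neq j$. In particular, a value in $P_i$ equals a value in $P_j$ only if $\sigma(i)=\sigma(j)$.

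For the forward direction, let $S$ be the set of positions of an occurrence of $\beta$ in $\omega$. Put $T=\{i: S\cap P_i\neq\emptyset\}$, which is nonempty because $\beta$ is nonempty. For $i\in T$, let $\gamma_i=\st(\omega|_{S\cap P_i})$; this is a pattern of $\alpha_i$, since $\omega|_{P_i}$ is order isomorphic to $\alpha_i$. We then check that $\st(\omega|_S)=\st(\sigma|_T)[\gamma_i:i\in T]$. The letters of $S$ are grouped into consecutive blocks $S\cap P_i$, taken in the order of $T$. Inside each block the letters are order isomorphic to $\gamma_i$. Letters in different blocks compare according to $\sigma|_T$. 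These three facts are exactly the defining properties of the inflation. We also need the inflation to be legitimate. If $\sigma(i)$ is a repeated letter of $\st(\sigma|_T)$, then it is repeated in $\sigma$, so by \eqref{eq:constant-components} $\alpha_i$ is constant and hence so is $\gamma_i$. Thus the restriction holds. Under \eqref{eq:repeat-restriction}, $\alpha_i=1$ forces $\gamma_i=1$.

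For the converse, suppose $\beta=\st(\sigma|_T)[\gamma_i:i\in T]$ with $\gamma_i\leq\alpha_i$. For each $i\in T$, choose positions $S_i\subseteq P_i$ realizing $\gamma_i$ inside $\omega|_{P_i}$, and let $S=\bigcup_{i\in T} S_i$. The same block comparison shows that $\st(\omega|_S)$ is the inflation of $\st(\sigma|_T)$ by the $\gamma_i$, which is $\beta$.

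The only delicate point is the standardization bookkeeping: the inflation of a standardized subword must agree with the standardization of the corresponding subword of the inflation. The subtle case is when equal letters come from distinct parts with equal quotient values. There every part is a constant run of a single common value, so equalities between blocks are preserved, and the value bands of singleton fibres remain disjoint intervals placed in the correct relative order. Because $\omega$ and $\beta$ are both defined up to order isomorphism, it suffices to compare the pairwise order relations, namely $<$, $=$ and $>$, of all pairs of letters. Doing the comparison pairwise avoids explicit value computations.
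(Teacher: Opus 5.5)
Your proposal is correct and follows the paper's proof essentially step by step. You use the same $T$ and $\gamma_i=\st(\omega|_{S\cap P_i})$, the same blockwise comparison of letters (within a part, between parts with distinct quotient values, and between constant parts with equal quotient values), the same validity check via constancy of $\alpha_i$ at repeated letters of $\sigma$, and the same converse by taking the union of occurrences of the $\gamma_i$ in the respective parts.
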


\begin{proof}
  Choose positions $S$ witnessing an occurrence of $\beta$ in $\omega$, and let $S_i$ be those
  in the $i$th part of the inflation. Put
  $T=\{i:S_i\neq\emptyset\}$ and $\gamma_i=\st(\omega|_{S_i})$ for $i\in T$.
  Then $\gamma_i\leq\alpha_i$, and $\gamma_i$ is constant whenever $\sigma(i)$ is
  repeated, since the corresponding part is constant. Under
  \eqref{eq:repeat-restriction}, that part is a single position and $\gamma_i=1$.
  Here $T$ is the set of indices whose corresponding part in the
  inflation contains at least one entry in the selected occurrence of
  $\beta$; and $\gamma_i$ is the standardization of all the entries in
  $S_i$ taking part in such occurrence.
  Now, restricting $\sigma$ to $T$ and inflating $\sigma|_T$
  by the $\gamma_i$ gives the same result as restricting $\omega$ to $S$.
  Indeed, within a part the selected letters have relative order $\gamma_i$.
  Between parts with distinct values of $\sigma$, they compare as those
  values do. Parts with equal values of $\sigma$ are constant factors
  carrying equal letters. Hence
  \[
    \beta=\st(\omega|_S)=\st(\sigma|_T)[\gamma_i:i\in T].
  \]
  This is a valid inflation. Indeed, every repeated letter of $\st(\sigma|_T)$
  comes from a repeated letter of $\sigma$ and is therefore inflated by a constant word.

  Conversely, choose an occurrence of each $\gamma_i$ in the $i$th part.
  Their union has the displayed standardization, by the same comparison
  of values, and thus witnesses an occurrence of $\beta$ in $\omega$.
\end{proof}

We shall compare a class with its substitution closure, but
first we need a lemma.

\begin{lemma}\label{localization}\label{module-localization}
  Let $\omega=\sigma[\alpha_1,\ldots,\alpha_k]$ be an inflation and $\beta$
  a simple Cayley permutation with $\beta\leq \omega$.  Then
  $\beta\leq\sigma$ or $\beta\leq\alpha_i$ for some $i$.
  If the inflation satisfies~\eqref{eq:repeat-restriction}, the conclusion
  holds for every semisimple $\beta$.
\end{lemma}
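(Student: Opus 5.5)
By Lemma~\ref{patterns-through}, an occurrence of $\beta$ in $\omega$ yields a factorization
\[
  \beta=\st(\sigma|_T)\bigl[\gamma_i:i\in T\bigr],
\]
with $\emptyset\neq T\subseteq[k]$ and nonempty $\gamma_i\leq\alpha_i$. The parts of this inflation of $\beta$ are the positions of $\beta$ coming from each $S_i$. The plan is to read this factorization against the simplicity of $\beta$ and split into three cases. If $|T|=1$, then $\beta=\gamma_i\leq\alpha_i$ and we are done. If every $\gamma_i$ has length one, then $\beta=\st(\sigma|_T)\leq\sigma$ and we are done. So assume $|T|\geq2$ and some $\gamma_j$ has length at least two, and derive a contradiction.

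In that remaining case the $j$th part $P$ of the inflation of $\beta$ is a positional interval with $2\leq|P|<|\beta|$. I would show that $P$ is a module of $R(\beta)$. If $\sigma(j)$ occurs only once in $\sigma|_T$, then the letters of part $P$ form a consecutive value band, and no other part shares those values. This is because distinct quotient values are inflated to disjoint bands lying in the order of $\sigma$. Hence $P$ is a nontrivial H-interval of $\beta$. If instead $\sigma(j)$ is repeated in $\sigma|_T$, then $\sigma(j)$ is repeated in $\sigma$, so $\gamma_j$ is constant by Lemma~\ref{patterns-through}. Then $P$ is a constant positional interval, which is a module by Lemma~\ref{modules}. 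In either case $R(\beta)$ has a nontrivial module, so it is not indecomposable.

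When $|\beta|\geq3$, Theorem~\ref{indecomposable} then says $\beta$ is not simple, a contradiction. The small cases need separate handling: for $|\beta|\leq2$ the statement is immediate unless $\beta=11$ arises inside one part. But $11$ inside a part means $\beta=\gamma_j\leq\alpha_j$, which is the case $|T|=1$. Similarly $|\beta|=2$ with $|T|=2$ forces all $\gamma_i$ to have length one, so $\beta\leq\sigma$.

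For the semisimple version under~\eqref{eq:repeat-restriction}, every repeated quotient letter is inflated by $1$. So any part with $|\gamma_j|\geq2$ sits at a letter $\sigma(j)$ occurring once in $\sigma$, hence once in $\sigma|_T$. By the first subcase above, the part is then a nontrivial H-interval of $\beta$. That contradicts semisimplicity directly from the definition, with no appeal to modules or Theorem~\ref{indecomposable}. Note that $11$ is not an issue here: $11$ is semisimple, and it still falls into the two easy cases.

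The main obstacle I expect is the step in the second paragraph: verifying carefully that a part at a unique quotient letter is \emph{saturated} in $\beta$, meaning no letter from another part takes a value in its band. This needs the observation that parts with distinct $\sigma$-values receive disjoint value sets, ordered as in $\sigma$. That property should be read off from the definition of inflation, and it persists after passing to $\st(\sigma|_T)$.
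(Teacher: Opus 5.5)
Your proposal is correct and follows the paper's proof. You factor $\beta$ via Lemma~\ref{patterns-through} and dispose of $|T|=1$. Otherwise you observe that a component $\gamma_j$ of length at least two occupies a nontrivial module of $R(\beta)$, namely an H-interval at a singleton fibre or a constant positional interval at a repeated one, which simplicity forbids; under~\eqref{eq:repeat-restriction} it is a proper H-interval, which semisimplicity forbids. Your split into the two module types makes explicit what the paper leaves implicit. Your separate small-length check is harmless but vacuous, since $2\le|P|<|\beta|$ already forces $|\beta|\ge3$; the empty $\beta$ needs only the trivial remark that it lies in $\sigma$.
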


\begin{proof}
  The claim is immediate if $\beta$ is empty. Otherwise,
  write $\beta=\st(\sigma|_T)[\gamma_i:i\in T]$ as in
  Lemma~\ref{patterns-through}.  If $T=\{i\}$ then $\beta=\gamma_i\leq\alpha_i$ and we are done.
  Suppose instead that $|T|\ge 2$. A component $\gamma_i$ of length greater than one would occupy a
  nontrivial module of $R(\beta)$, which simplicity
  forbids; so $\gamma_i=1$ for each $i$ and $\beta=\st(\sigma|_T)\leq\sigma$.
  Under~\eqref{eq:repeat-restriction}, such a component occupies a proper
  H-interval of $\beta$ with more than one position, so semisimplicity
  suffices to exclude it.
\end{proof}

\begin{proposition}\label{substitution-closed}\label{module-closure}
  Let $\C$ be a class of Cayley permutations.
  \begin{enumerate}
    \item $\C=\cl{\C}$ if and only if each of
      its basis elements is simple.
    \item $\C=\clH{\C}$ if and only if each basis
      element is semisimple.
  \end{enumerate}
\end{proposition}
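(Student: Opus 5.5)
The plan is to prove both parts with the same two-directional argument, using Lemma~\ref{localization} in one direction and an explicit nontrivial decomposition of each non-simple (resp.\ non-semisimple) basis element in the other. Throughout, $\mathcal B$ denotes the basis of $\C$, so $\C=\Av(\mathcal B)$.

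\emph{Sufficiency in part 1.} Assume every basis element is simple. Since $\C\subseteq\cl{\C}$, it suffices to show that $\C$ is closed under one inflation step. An induction on the number of inflations then gives $\cl{\C}=\C$.
\begin{itemize}
\item Take $\sigma,\alpha_1,\ldots,\alpha_k\in\C$ with the inflation $\omega=\sigma[\alpha_1,\ldots,\alpha_k]$ defined, i.e.\ satisfying~\eqref{eq:constant-components}.
\item If some $\beta\in\mathcal B$ satisfied $\beta\leq\omega$, then, since $\beta$ is simple, Lemma~\ref{localization} gives $\beta\leq\sigma$ or $\beta\leq\alpha_i$ for some $i$.
\item Either case contradicts membership in $\C$, so $\omega\in\C$.
\end{itemize}
For part 2 the same argument works. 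We restrict to inflations satisfying~\eqref{eq:repeat-restriction} and use the second sentence of Lemma~\ref{localization}, which only needs $\beta$ semisimple.

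\emph{Necessity in part 1.} Suppose $\C=\cl{\C}$ and that some $\beta\in\mathcal B$ is not simple. I will write $\beta$ as an inflation of proper patterns, all of which lie in $\C$ by minimality of $\beta$. Substitution closure then puts $\beta$ in $\C$, a contradiction.
\begin{itemize}
\item \emph{Small cases.} $\beta$ is nonempty, since the empty word is simple. Every Cayley permutation of length at most two is simple, so $|\beta|\geq3$.
\item \emph{Decomposition.} Theorem~\ref{module-substitution-thm} gives $\beta=\sigma[\alpha_1,\ldots,\alpha_k]$ with $\sigma$ simple, $k\geq2$, and components satisfying~\eqref{eq:constant-components}.
\item \emph{Properness.} Each $\alpha_i$ is a proper pattern of $\beta$, because $k\geq2$ and all components are nonempty. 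If $k<|\beta|$, then $\sigma$ is also a proper pattern of $\beta$. So it remains to exclude $k=|\beta|$.
\item \emph{Excluding $k=|\beta|$.} In that case all $\alpha_i=1$ and $\beta=\sigma$ is simple, contrary to assumption.
\end{itemize}
The one point to check here is the degenerate quotient $\sigma=11$. Then $\beta=11[1,1^{r}]=1^{r+1}$, and the patterns $11$ and $1^{r}$ are proper since $|\beta|\geq3$. This fits the general argument, but I will state it explicitly because uniqueness in Theorem~\ref{module-substitution-thm} is phrased specially for it.

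\emph{Necessity in part 2.} Here $\clH{\C}=\C$, and I take a non-semisimple basis element $\beta$.
\begin{itemize}
\item Note that $|\beta|\geq2$, since $\beta$ is nonempty and a single letter is semisimple. So Theorem~\ref{substitution-thm} applies and gives $\beta=\sigma[\alpha_1,\ldots,\alpha_k]$ with $\sigma$ semisimple, $k\geq2$, and~\eqref{eq:repeat-restriction} holding.
\item As before, $k=|\beta|$ would make $\beta=\sigma$ semisimple. Hence $\sigma$ and every $\alpha_i$ are proper patterns of $\beta$, and so lie in $\C$.
\item Since this inflation satisfies~\eqref{eq:repeat-restriction}, closure under $\clH{\cdot}$ forces $\beta\in\C$, a contradiction.
\end{itemize}

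The main obstacle is making sure the decomposition theorems really produce a proper quotient whenever $\beta$ fails (semi)simplicity. In particular, I need that the quotient equals $\beta$ only when all components are singletons. This follows from comparing lengths, $|\beta|=\sum_i|\alpha_i|\geq k$, with equality exactly when every $|\alpha_i|=1$.
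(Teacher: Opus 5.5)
Your proposal is correct and follows the paper's proof essentially verbatim: sufficiency comes from Lemma~\ref{localization} (its semisimple clause for part~2), and necessity comes from decomposing a non-simple (resp.\ non-semisimple) basis element with Theorem~\ref{module-substitution-thm} (resp.\ Theorem~\ref{substitution-thm}) into proper patterns lying in $\C$. Your length count $|\beta|=\sum_i|\alpha_i|\geq k$, with equality only when $\beta=\sigma$, usefully justifies the inequality $k<|\beta|$ that the paper asserts without comment.
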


\begin{proof}
  Let $\mathcal B$ be the basis of $\C$. Suppose first that $\C$ is
  substitution-closed and some $\beta\in\mathcal B$ is not simple.
  By Theorem~\ref{module-substitution-thm},
  $\beta=\sigma[\alpha_1,\ldots,\alpha_k]$ with $\sigma$ simple of length
  $2\leq k<|\beta|$ and the components obeying~\eqref{eq:constant-components}.
  The quotient and components are proper patterns of $\beta$, so they all
  lie in $\C$ by the minimality of a basis element. Substitution closure
  then gives $\beta\in\C$, a contradiction.

  Conversely, suppose every $\beta\in\mathcal{B}$ is simple, and let
  $\omega=\sigma[\alpha_1,\ldots,\alpha_k]$ with $\sigma,\alpha_i\in\C$.  If
  $\omega\notin\C$ then $\omega$ contains some $\beta\in\mathcal{B}$, necessarily simple; by
  Lemma~\ref{localization}, $\beta\leq\alpha_i$ for some $i$ or $\beta\leq\sigma$.
  Either possibility contradicts $\sigma,\alpha_i\in\C$. Hence $\omega\in\C$.

  For closure under~\eqref{eq:repeat-restriction}, the same argument uses
  semisimple quotients, Theorem~\ref{substitution-thm}, and the semisimple
  assertion of Lemma~\ref{localization}.
\end{proof}

\begin{lemma}\label{closure-simples}
  Let $\C$ be a class of Cayley permutations.
  \begin{enumerate}
    \item $\C$ and $\cl{\C}$ have the same simple members.
    \item $\C$ and $\clH{\C}$ have the same semisimple members.
  \end{enumerate}
\end{lemma}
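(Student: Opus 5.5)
Since $\C\subseteq\clH{\C}\subseteq\cl{\C}$, every simple member of $\C$ lies in $\cl{\C}$ and every semisimple member of $\C$ lies in $\clH{\C}$. It therefore suffices to show the reverse inclusions: a simple member of $\cl{\C}$ already lies in $\C$, and a semisimple member of $\clH{\C}$ already lies in $\C$.

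First we describe the closures explicitly. We take $\cl{\C}$ to be the smallest superset of $\C$ closed under inflation. Equivalently, it is the union $\bigcup_{m\ge0}\C_m$, where $\C_0=\C$ and $\C_{m+1}$ consists of all inflations $\sigma[\alpha_1,\ldots,\alpha_k]$ with $\sigma,\alpha_i\in\C_m$ satisfying~\eqref{eq:constant-components}. This union is closed under inflation because each inflation involves only finitely many words, all of which lie in some common $\C_m$. We describe $\clH{\C}$ the same way, using~\eqref{eq:repeat-restriction} in place of~\eqref{eq:constant-components}.

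We also need each $\C_m$ to be hereditary, so that the closures are again classes. We prove this by induction on $m$. By Lemma~\ref{patterns-through}, every pattern of $\sigma[\alpha_1,\ldots,\alpha_k]$ has the form $\st(\sigma|_T)[\gamma_i:i\in T]$ with $\gamma_i\le\alpha_i$. Here $\st(\sigma|_T)\le\sigma$ and $\gamma_i\le\alpha_i$ all lie in $\C_m$ by the induction hypothesis. This is a valid inflation of the required type: under~\eqref{eq:repeat-restriction} the $\gamma_i$ at repeated letters are $1$, and under~\eqref{eq:constant-components} they are constant. The one case needing a remark is $|T|=1$, where the pattern is just $\gamma_i$, which lies in $\C_m\subseteq\C_{m+1}$.

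Next we show by induction on $m$ that every simple $\beta\in\C_m$ lies in $\C$. The base case $m=0$ is trivial. For $m+1$, write $\beta=\sigma[\alpha_1,\ldots,\alpha_k]$ with $\sigma,\alpha_i\in\C_m$. Since $\beta\le\beta$, Lemma~\ref{localization} gives $\beta\le\sigma$ or $\beta\le\alpha_i$ for some $i$. Because $\C_m$ is hereditary, $\beta\in\C_m$, and the induction hypothesis gives $\beta\in\C$. The semisimple case for $\clH{\C}$ is identical, using the second assertion of Lemma~\ref{localization} for inflations satisfying~\eqref{eq:repeat-restriction}.

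The main obstacle is the bookkeeping in the previous paragraph: making sure each level $\C_m$ is hereditary, so that the localization conclusion $\beta\le\sigma$ or $\beta\le\alpha_i$ actually places $\beta$ in $\C_m$. With that in hand, the induction closes the argument. If the paper instead defines $\cl{\C}$ as the smallest substitution-closed class containing $\C$, the same layered description identifies it, because we have just shown that the union of the $\C_m$ is a class.
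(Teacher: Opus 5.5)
Your proof is correct, but it runs in the opposite direction from the paper's. The paper argues from above. It takes the set $\mathcal D$ of all words whose simple patterns all lie in $\C$, notes that $\mathcal D$ is a class containing $\C$, and uses Lemma~\ref{localization} to show it is substitution-closed. Minimality of the closure then gives $\cl{\C}\subseteq\mathcal D$, and a simple member of $\cl{\C}$ is a simple pattern of itself, hence lies in $\C$.

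You instead build the closure from below as the nested union of the layers $\C_m$ and induct on $m$. Both routes rest on the same key fact, Lemma~\ref{localization}. The paper's definition of $\mathcal D$ builds the downward transfer of simple patterns in from the start. It therefore needs neither an explicit description of the closure nor any heredity bookkeeping, and the argument is only a few lines long.

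Your route costs the extra induction showing each $\C_m$ is hereditary via Lemma~\ref{patterns-through}. You should also justify that the layers are nested, for example by $\omega=1[\omega]$ with $1\in\C$, or simply by putting $\C_m$ into $\C_{m+1}$ by definition. In return, your route proves something the paper relies on later without comment: $\cl{\C}$ and $\clH{\C}$, if defined purely as closures under inflation, are again hereditary classes. This is needed, for instance, when Corollary~\ref{closure-basis} applies Proposition~\ref{substitution-closed} to $\cl{\C}$, and when Theorem~\ref{finite-basis} applies Theorem~\ref{pwo} to it.
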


\begin{proof}
  Let $\mathcal D$ be the set of all Cayley permutations $\omega$ such that
  every simple Cayley permutation $\sigma\leq \omega$ belongs to $\C$.
  It is a class containing $\C$, and
  Lemma~\ref{localization} shows that it is substitution-closed.
  Thus $\C\subseteq\cl{\C}\subseteq\mathcal D$.
  If $\sigma$ is a simple member of $\cl{\C}$, then
  $\sigma\in\mathcal D$ and $\sigma\leq\sigma$, so the defining condition
  of $\mathcal D$ gives $\sigma\in\C$. Conversely, every simple member
  of $\C$ belongs to $\cl{\C}$.
  Hence the two classes have the same simple members.
  The same argument applies to semisimple members and
  $\clH{\C}$, using the semisimple assertion of
  Lemma~\ref{localization}.
\end{proof}

\begin{corollary}\label{closure-basis}
  If the longest simple members of a class $\C$ have length $k$, then the basis
  elements of $\cl{\C}$ have length at most $k+2$.
  If the longest semisimple members of $\C$ have length $k$, the same
  bound holds for the basis elements of $\clH{\C}$.
\end{corollary}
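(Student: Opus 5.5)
The plan is to combine Proposition~\ref{substitution-closed} with Lemma~\ref{closure-simples} and the deletion results of Section~\ref{deletions}. By Proposition~\ref{substitution-closed}, every basis element $\beta$ of $\cl{\C}$ is simple. Being a basis element, $\beta\notin\cl{\C}$, and every proper pattern of $\beta$ lies in $\cl{\C}$. By Lemma~\ref{closure-simples}, the simple members of $\cl{\C}$ are exactly the simple members of $\C$, so they have length at most $k$. Hence every simple proper pattern of $\beta$ has length at most $k$. It therefore suffices to show that a simple word of length $n\geq k+3$ always has a simple proper pattern of length greater than $k$. For such a $\beta$, Theorem~\ref{deletion} supplies a simple pattern $\nu\leq\beta$ with $|\nu|\in\{n-1,n-2\}$. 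Then $|\nu|\geq n-2\geq k+1>k$, and this $\nu$ lies in $\cl{\C}$ as a proper pattern of $\beta$. So $\nu$ is a simple member of $\cl{\C}$, hence of $\C$, of length exceeding $k$, a contradiction. Thus $|\beta|\leq k+2$.

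A few degenerate cases need checking. If $k$ is small, the simple words $1$, $11$, $12$, $21$ are patterns of nearly everything, and Theorem~\ref{deletion} only applies for $n\geq 2$. Since $n\geq k+3\geq 3$ whenever $k\geq0$, this hypothesis is met. If $\C$ is empty or has only the empty simple member, the conclusion is the degenerate bound $k+2$, and the same argument works because the pattern $\nu$ produced by Theorem~\ref{deletion} is nonempty. Basis elements that are the empty word cannot occur unless $\cl{\C}$ is empty, in which case the bound holds trivially.

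For the semisimple statement I would run the same argument with the second parts of Proposition~\ref{substitution-closed} and Lemma~\ref{closure-simples}. These make every basis element of $\clH{\C}$ semisimple and identify the semisimple members of $\clH{\C}$ with those of $\C$, all of length at most $k$. Corollary~\ref{semisimple-deletion} then replaces Theorem~\ref{deletion}: it gives a semisimple pattern of length $n-1$ or $n-2$ of any semisimple word of length $n\geq2$, which yields the same contradiction when $n\geq k+3$.

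There is no real obstacle; the only subtle point is the identification of simple members of the closure with those of $\C$. Without it, a long simple pattern $\nu$ of $\beta$ lying in $\cl{\C}$ would not immediately contradict the length bound. Lemma~\ref{closure-simples} handles exactly this, so the proof is a short assembly of earlier results.
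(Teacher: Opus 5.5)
Your proposal is correct and follows essentially the same route as the paper. Proposition~\ref{substitution-closed} makes each basis element of the closure simple (respectively semisimple), Theorem~\ref{deletion} (respectively Corollary~\ref{semisimple-deletion}) supplies a proper simple (semisimple) pattern of length greater than $k$, and Lemma~\ref{closure-simples} turns that pattern's membership in the closure into a contradiction with the length bound on $\C$.
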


\begin{proof}
  Let $\beta$ be a basis element of $\cl{\C}$, of length $\ell$.  As
  $\cl{\C}$ is substitution-closed, Proposition~\ref{substitution-closed}
  makes $\beta$ simple.  Suppose $\ell\geq k+3$.  By Theorem~\ref{deletion},
  $\beta$ properly contains a simple Cayley permutation $\sigma$ of length
  $\ell-1$ or $\ell-2$, hence of length greater than $k$; so $\sigma\notin\C$
  and, by Lemma~\ref{closure-simples}, $\sigma\notin\cl{\C}$.  But every
  proper pattern of a basis element lies in the class.  Hence $\ell\leq k+2$.
  For semisimple members and $\clH{\C}$, use the same
  argument with Corollary~\ref{semisimple-deletion} in place of
  Theorem~\ref{deletion}.
\end{proof}

Given $\omega\in\Cay$, let
\[
  \mu(\omega)=\#\{j:|\omega^{-1}(j)|=1\}
\]
denote the number of singleton fibres of $\omega$.

\begin{proposition}\label{closed-algebraic}
  Let $\C$ be a class closed under inflations satisfying
  \eqref{eq:repeat-restriction}, containing at least one nonempty word,
  with only finitely many semisimple members. Let $F$
  be the ordinary generating function for its nonempty members, and put
  \[
    \widetilde\Phi_{\C}(x,y)
    =\sum_{\substack{\sigma\in\C\text{ semisimple}\\|\sigma|\geq3}}
    x^{|\sigma|-\mu(\sigma)}y^{\mu(\sigma)}.
  \]
  For a word $\omega$, let $\delta_{\omega}$ be $1$ if $\omega\in\C$ and $0$ otherwise. Then
  \begin{equation}\label{eq:closed-algebraic}
    \widetilde\Phi_{\C}(x,F)=F-x-\delta_{11}x^2
      -\frac{(\delta_{12}+\delta_{21})F^2}{1+F}.
  \end{equation}
  In particular, the ordinary generating function of $\C$ is algebraic
  over $\QQ(x)$.
\end{proposition}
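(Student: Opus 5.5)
We classify every nonempty $\omega\in\C$ by its semisimple quotient from Theorem~\ref{substitution-thm}, and read off one term of \eqref{eq:closed-algebraic} from each kind of quotient. Length one contributes $x$. For $|\omega|\geq2$ we write $\omega=\sigma[\alpha_1,\ldots,\alpha_k]$ with $\sigma$ semisimple of length $k\geq2$ and the components obeying \eqref{eq:repeat-restriction}.

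The first step is to show that, for a fixed semisimple $\sigma$, the map $(\alpha_1,\ldots,\alpha_k)\mapsto\sigma[\alpha_1,\ldots,\alpha_k]$ is a bijection onto the members of $\C$ with quotient $\sigma$. For this we need $\sigma\in\C$ together with tuples in which $\alpha_i=1$ whenever $\sigma(i)$ is repeated and $\alpha_i\in\C$ is an arbitrary nonempty word whenever $\sigma(i)$ is a singleton fibre. This holds for $\sigma\notin\{12,21\}$. The quotient and components are patterns of $\omega$, so they lie in $\C$ by heredity. Conversely, closure under restricted inflation puts every such inflation in $\C$, and uniqueness in Theorem~\ref{substitution-thm} makes the map injective with quotient exactly $\sigma$. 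Since $\sigma$ has $\mu(\sigma)$ free components and $|\sigma|-\mu(\sigma)$ forced letters, the family contributes $x^{|\sigma|-\mu(\sigma)}F^{\mu(\sigma)}$. Summing over semisimple $\sigma\in\C$ with $|\sigma|\geq3$ gives $\widetilde\Phi_\C(x,F)$, which is a polynomial because there are finitely many semisimple members.

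The length-two quotients are handled separately. For $\sigma=11$ both positions repeat, so the only word is $11$ itself, giving $\delta_{11}x^2$. For $\sigma=12$ we have $\omega=\alpha_1\oplus\alpha_2$ with $\alpha_1$ plus indecomposable, and $\omega$ lies in $\C$ exactly when $12\in\C$ and $\alpha_1,\alpha_2\in\C$. We let $F_{\oplus}$ be the generating function of the plus decomposable members. Every member is uniquely an $\oplus$-sum of plus indecomposable members, and closure gives all such sums when $12\in\C$. Hence $F_\oplus=(F-F_\oplus)F$, so $F_\oplus=F^2/(1+F)$ when $\delta_{12}=1$ and $F_\oplus=0$ otherwise. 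The skew case is symmetric.

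The main point to check is that these families are disjoint and exhaust $\C$, with no double counting between $\oplus$ and $\ominus$ or between $11$ and longer quotients. Uniqueness of the quotient in Theorem~\ref{substitution-thm} supplies this. We also need the restriction \eqref{eq:repeat-restriction} to correctly force $\alpha_i=1$ at repeated letters; for instance $111$ has quotient $111$, which has length three and is counted in $\widetilde\Phi_\C$. Adding the families gives $F=x+\delta_{11}x^2+(\delta_{12}+\delta_{21})F^2/(1+F)+\widetilde\Phi_\C(x,F)$, which rearranges to \eqref{eq:closed-algebraic}. Multiplying through by $1+F$ gives a nonzero polynomial relation $P(x,F)=0$; it is nonzero because the coefficient of the $F$-linear term cannot cancel. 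So $F$ is algebraic over $\QQ(x)$, and so is the full generating function $1+F$.
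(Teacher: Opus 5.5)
Your proof is correct and follows the paper's route. The paper also checks that a restricted inflation lies in $\C$ exactly when its quotient and components do, and then cites Lemma~4.3 of the companion paper for the count; you carry that count out directly from Theorem~\ref{substitution-thm}, including the $F^2/(1+F)$ term for the quotients $12$ and $21$.

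You leave two small points implicit. First, the term $x$ needs $1\in\C$, which follows from heredity and the nonempty-word hypothesis. Second, $P$ is nonzero because every monomial of $\widetilde\Phi_{\C}$ has total degree at least three, so the coefficient of $x^0y$ in $P$ is $1$.
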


\begin{proof}
  Since $\C$ contains a nonempty word and is closed under classical pattern
  containment, it contains $1$. For an inflation satisfying~\eqref{eq:repeat-restriction},
  $\omega=\sigma[\alpha_1,\ldots,\alpha_k]$, the quotient $\sigma$ and each
  component $\alpha_i$ are classical patterns of $\omega$. Thus, if $\omega\in\C$,
  then $\sigma,\alpha_1,\ldots,\alpha_k\in\C$. Conversely, if the quotient
  and all components belong to $\C$, the closure condition gives $\omega\in\C$.
  Lemma~4.3 of the companion
  paper~\cite{cerbai-claesson-simple},
  applied to membership in $\C$ with the singleton-fibre weight set to $1$,
  now gives~\eqref{eq:closed-algebraic}.

  To prove algebraicity, note that $\widetilde\Phi_{\C}$ is a polynomial because
  $\C$ has only finitely many semisimple members. Multiplying
  \eqref{eq:closed-algebraic} by $1+F$ and rearranging gives $P(x,F)=0$, where
  \[
    P(x,y)=(1+y)\bigl(y-x-\delta_{11}x^2-\widetilde\Phi_{\C}(x,y)\bigr)
      -(\delta_{12}+\delta_{21})y^2
    \in\QQ[x,y].
  \]
  Every monomial of $\widetilde\Phi_{\C}$ has total degree at least three, so the
  coefficient of $x^0y$ in $P$ is $1$. In particular, $P$ is a nonzero
  polynomial, and $P(x,F)=0$ proves that $F$ is algebraic over $\QQ(x)$.
  Including the empty word gives the class generating function $1+F$,
  which is therefore algebraic as well.
\end{proof}

\begin{proposition}\label{module-closed-algebraic}
  Let $\C$ be substitution-closed, contain a nonempty word,
  and have finitely many simple members. Let $F$ count its nonempty
  members and put
  \[
    \Phi_{\C}(x,y)=
      \sum_{\substack{\sigma\in\C\text{ simple}\\|\sigma|\geq3}}
        x^{|\sigma|-\mu(\sigma)}y^{\mu(\sigma)}.
  \]
  With the indicators $\delta_\omega$ as before,
  \[
    F=x+\delta_{11}\frac{x^2}{1-x}
       +\frac{(\delta_{12}+\delta_{21})F^2}{1+F}
       +\Phi_{\C}\!\left(\frac{x}{1-x},F\right).
  \]
  In particular, the generating function of $\C$ is algebraic.
\end{proposition}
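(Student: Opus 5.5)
We will decompose each nonempty member of $\C$ by its unique simple quotient, using Theorem~\ref{module-substitution-thm}, and count by quotient type. Every $\omega\in\C$ of length at least two can be written uniquely as $\sigma[\alpha_1,\ldots,\alpha_k]$ with $\sigma$ simple, $k\ge2$, and the components satisfying~\eqref{eq:constant-components}, subject to the normalizations stated there. Since the quotient and the components are patterns of $\omega$, they lie in $\C$. Conversely, substitution closure puts every such inflation with quotient and components in $\C$ back into $\C$. So we get a bijection between $\C$ minus the single letter $1$ and the set of admissible tuples $(\sigma,\alpha_1,\ldots,\alpha_k)$ with everything in $\C$. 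The term $x$ accounts for the word $1$, which lies in $\C$ because $\C$ contains a nonempty word and is a class.

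We then sum over $\sigma$ by cases. If $\sigma=11$ (present exactly when $\delta_{11}=1$), normalization forces $\alpha_1=1$, and $\alpha_2=1^{r}$ with $r\ge1$. The words obtained are $1^{m}$ with $m\ge2$, contributing $x^2/(1-x)$. All of these lie in $\C$ by closure, since $11\in\C$.

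If $\sigma=12$, we need the generating function $G$ of plus indecomposable nonempty members. Every nonempty member is uniquely a direct sum of plus indecomposables: a first plus indecomposable $\alpha_1$ followed by an arbitrary nonempty member, or $\alpha_1$ alone. Hence $F=G+GF$, so $G=F/(1+F)$, and the contribution is $GF=F^2/(1+F)$. The case $21$ is symmetric with minus indecomposables. A word with both quotients would need to be both a direct and a skew sum, which uniqueness of the quotient excludes, so the cases are disjoint.

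If $\sigma$ is simple with $|\sigma|\ge3$, each singleton fibre carries an arbitrary nonempty component from $\C$, contributing a factor $F$. Each position in a repeated fibre carries $1^{r}$ with $r\ge1$, contributing a factor $x/(1-x)$; these constant words lie in $\C$ because $\sigma$ has a repeated letter, so $11\le\sigma\in\C$. There are $|\sigma|-\mu(\sigma)$ positions of the second kind. The contribution is therefore $(x/(1-x))^{|\sigma|-\mu(\sigma)}F^{\mu(\sigma)}$, which sums to $\Phi_{\C}(x/(1-x),F)$.

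For algebraicity, $\Phi_\C$ is a polynomial because $\C$ has finitely many simple members. Clearing the denominators $(1-x)^N(1+F)$ gives $P(x,F)=0$ with $P\in\QQ[x,y]$. We must check that $P\ne0$. As in Proposition~\ref{closed-algebraic}, every monomial of $\Phi_\C(x,y)$ has total degree $|\sigma|\ge3$, so after substitution it contributes nothing of the form $x^0y^1$. The term $F^2/(1+F)$ multiplied by $1+y$ gives $y^2$. So the coefficient of $x^0y$ in $(1-x)^N(1+y)(y-\cdots)$ is $1$, and hence $P$ is nonzero. Adding $1$ for the empty word preserves algebraicity.

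The main obstacle is the case analysis for $\sigma\in\{11,12,21\}$. It requires checking that the normalizations in Theorem~\ref{module-substitution-thm} give exactly the stated series, and that the decomposition $F=G(1+F)$ is valid inside $\C$, in particular that the plus indecomposable factors of a member lie in $\C$. This holds because they are patterns of it.
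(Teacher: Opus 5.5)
Your proof is correct and follows the paper's route. You decompose each member by its unique simple quotient from Theorem~\ref{module-substitution-thm} and read off the contributions of $11$, $12$, $21$ and the longer simple quotients, substituting $x/(1-x)$ at repeated positions and $F$ at singleton fibres. You then get algebraicity from the coefficient $1$ of $x^0y$ in the cleared polynomial. The only difference is that you derive the $F^2/(1+F)$ term directly from $F=G(1+F)$, whereas the paper refers back to Proposition~\ref{closed-algebraic}; this identity is valid because $12\in\C$ in that case, so substitution closure makes every $\alpha_1\oplus\beta$ with $\alpha_1,\beta\in\C$ a member.
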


\begin{proof}
  Use the unique decomposition in Theorem~\ref{module-substitution-thm}.
  If $11\in\C$, closure gives all nonempty constant words; otherwise
  only the constant word $1$ belongs to $\C$. This yields the first
  two terms. The sum contributions are as in
  Proposition~\ref{closed-algebraic}. A repeated position of a longer
  simple quotient may carry any nonempty constant run. Such a quotient
  contains $11$, so all these runs belong to $\C$, and its contribution
  is obtained by substituting $x/(1-x)$ at repeated positions and $F$
  at singleton positions. Finally, $\Phi_{\C}$ is a polynomial.
  Clearing denominators gives a nonzero polynomial equation for $F$;
  the corresponding equation has coefficient $1$ on its linear term
  in $F$ at the origin.
\end{proof}

For $\C=\Av(21)$, i.e.\ the set of weakly increasing Cayley permutations, the polynomial $\Phi_{\C}$ is zero and the equation
in Proposition~\ref{module-closed-algebraic}
reduces to $F=x/(1-x)+F^2/(1+F)$, giving $F=x/(1-2x)$.
The same result is obtained by Proposition~\ref{closed-algebraic},
where $\widetilde\Phi_{\C}(x,y)=x^3/(1-x)$ since the only semisimple
members of $\C$ are the constant words.

\begin{example}\label{rational-class}
  For $k\geq3$, let $\C_k=\Av(1^k,21)$. Its members are precisely
  \[
    1^{a_1}2^{a_2}\cdots m^{a_m},
    \qquad 1\leq a_i<k.
  \]
  Every constant word $1^{a_1}$ on the displayed list is semisimple, as is $12$.  Any other
  member with a run of length at least two has that run as a proper H-interval,
  while one with only singleton runs and length at least three begins with the
  proper H-interval $12$.
  Therefore, the nonempty semisimple members of $\C_k$ are
  \[
    1,11,\ldots,1^{k-1},\quad\text{and}\quad 12.
  \]
  Now, in a nonempty Cayley permutation of $\C_k$ the first value occurs
  between one and $k-1$ times, after which the
  remaining suffix, standardized, is again in $\C_k$. Thus, writing $G$
  for the ordinary generating function of $\C_k$, we have
  \[
    G(x)=1+(x+x^2+\cdots+x^{k-1})G(x),
  \]
  and hence
  \[
    G(x)=\frac{1}{1-x-x^2-\cdots-x^{k-1}}.
  \]
  In particular, $\Av(111,21)$ has generating function $1/(1-x-x^2)$.

  We obtain the same answer from Proposition~\ref{closed-algebraic}.
  The basis elements $1^k$ and $21$ are semisimple, so $\C_k$ is
  closed under inflations satisfying~\eqref{eq:repeat-restriction}. Here
  $\delta_{11}=\delta_{12}=1$, $\delta_{21}=0$, and
  $\widetilde\Phi_{\C_k}=\sum_{j=3}^{k-1}x^j$, since the constant quotients have no
  singleton fibres. Thus~\eqref{eq:closed-algebraic} simplifies to
  $F=(x+x^2+\cdots+x^{k-1})(1+F)$, as required.

  As this example illustrates, semisimple quotients are particularly
  convenient for multiplicity bounds.  Each value occurs at most $r$
  times in an inflation exactly when this holds for the quotient and
  every component. Note that the class $\C_k$ is not
  substitution-closed, since $11[1,1^{k-1}]=1^k$, although the
  quotient and both components belong to $\C_k$. Thus,
  Proposition~\ref{closed-algebraic} applies, whereas
  Proposition~\ref{module-closed-algebraic} does not.
\end{example}

\begin{example}\label{algebraic-class}
  Let $\C$ be the class avoiding the classical patterns
  \[
    111,\quad 121,\quad 212,\quad 231.
  \]
  The semisimple members of lengths one and two are $1,11,12,21$.
  Direct enumeration finds no semisimple members of length three or four, so
  Corollary~\ref{two-lengths} shows that these are all the nonempty semisimple
  members of $\C$.

  This class is not closed even under inflations satisfying
  \eqref{eq:repeat-restriction}, since $21,12,1\in\C$ but
  $21[12,1]=231\notin\C$. Thus, Proposition~\ref{closed-algebraic} does not apply.
  Instead, we use Theorem~\ref{substitution-thm} to partition the nonempty
  members by their unique semisimple quotient. Each quotient is a classical
  pattern of the member being decomposed and hence belongs to $\C$, so the
  only possibilities are $1,11,12,21$. Let $F$ be the ordinary generating
  function for the
  nonempty members of $\C$. The quotients $1$ and $11$ contribute $x$ and
  $x^2$, respectively.

  For the quotient $12$, the canonical decomposition is
  $\alpha\oplus\beta$, with $\alpha$ plus indecomposable.
  The class $\C$ is closed under direct sums because each forbidden pattern
  is plus indecomposable. If $I$ counts the nonempty plus indecomposable
  members, then $F=I+IF$, so $I=F/(1+F)$.
  Hence this quotient contributes $IF=F^2/(1+F)$.

  For the quotient $21$, the canonical decomposition is
  $\alpha\ominus\beta$, with $\alpha$ minus indecomposable.
  Since $\beta$ is nonempty, an increasing pair in $\alpha$, together with
  any letter of $\beta$, would form $231$. Thus $\alpha$ must avoid $12$. It is
  therefore weakly decreasing, and if it used more than one value it would be
  minus decomposable; hence it is constant.  Avoidance of $111$ leaves only
  $\alpha=1,11$.
  Conversely, with $\alpha=1$ or $11$ and $\beta\in\C$ nonempty,
  none of the forbidden patterns can occur across the two components.
  Thus the quotient $21$ contributes $(x+x^2)F$. Adding the four
  contributions gives
  \[
    F=x+x^2+\frac{F^2}{1+F}+(x+x^2)F.
  \]
  On writing $G=1+F$ for the ordinary generating function of $\C$, this
  simplifies to
  \[
    G=1+(x+x^2)G^2.
  \]
  A direct bijection also explains this equation.
  Avoidance of $111$, $121$, and $212$ says that every value occurs
  in one contiguous run of length one or
  two.  Contracting the runs gives a $231$-avoiding permutation, and conversely
  each entry of any such permutation may be replaced independently by a run of
  length one or two. Permutations avoiding $231$ are counted by
  the Catalan numbers; letting $\mathrm{Cat}$ denote their generating
  function, we obtain
  \[
    G(x)=\mathrm{Cat}(x+x^2)
    =\frac{1-\sqrt{1-4x-4x^2}}{2x(1+x)}.
  \]
  The second part of Example~\ref{algebraic-system-class} illustrates the profile construction
  of Theorem~\ref{algebraicity}
  for a class with one further simple member.
\end{example}

\section{General classes with finitely many simples}\label{finite-simple-classes}

We now remove the hypothesis of substitution closure and extend
Proposition~\ref{module-closed-algebraic} to any class. Namely, we show
in Theorem~\ref{finite-basis} and Theorem~\ref{algebraicity}
that every class with finitely many simple members has a finite
basis and an algebraic generating function. This finiteness hypothesis allows
infinitely many semisimple members; for example, $\Av(21)$ has simple
members $1$, $11$, and $12$, but contains every constant word.
The first step is to rule out
infinite antichains. A partially ordered set is \emph{partially well ordered}
if it contains neither an infinite strictly descending chain nor an infinite
antichain. For Cayley permutations, a strictly descending chain is necessarily
finite, since length decreases at each step.

The \emph{closure} of a set under a family of operations consists of all
elements obtained by applying those operations finitely many times, starting
with elements of the set. We use the following form of Higman's theorem,
stated by Albert and Atkinson~\cite{AlbAtk05}.

\begin{theorem}[Higman~{\cite{Hig52}}]\label{higman}
  Let $(P,\leq)$ be a partially ordered set and let
  $f_i:P^{n_i}\to P$, $1\leq i\leq r$, be finitely many operations of finite
  arity. Suppose each $f_i$ is order preserving in every argument; that is,
  for all $(a_1,\ldots,a_{n_i}),(b_1,\ldots,b_{n_i})\in P^{n_i}$,
  \[
    a_j\leq b_j\text{ for every }1\leq j\leq n_i
    \quad\Longrightarrow\quad
    f_i(a_1,\ldots,a_{n_i})\leq f_i(b_1,\ldots,b_{n_i}).
  \]
  Suppose also that
  \[
    a_j\leq f_i(a_1,\ldots,a_{n_i})
    \quad\text{for every }(a_1,\ldots,a_{n_i})\in P^{n_i}
    \text{ and }1\leq j\leq n_i.
  \]
  Then the closure of any finite subset of $P$ under these operations is
  partially well ordered.
\end{theorem}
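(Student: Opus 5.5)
We prove Higman's theorem by Nash-Williams' minimal bad sequence argument. We use the standard reformulation: call an infinite sequence $(x_m)$ in a poset \emph{good} if $x_i\leq x_j$ for some $i<j$, and \emph{bad} otherwise. A poset is partially well ordered if and only if every infinite sequence in it is good. An infinite antichain and an infinite strictly descending chain are both bad sequences, and conversely Ramsey's theorem extracts one of these from any bad sequence.

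Let $S\subseteq P$ be finite and let $\overline S$ be its closure under the $f_i$. Every $x\in\overline S$ is produced by a finite term built from elements of $S$ and the operations. Define its \emph{size} $s(x)\in\mathbb N$ as the least number of operation symbols in such a term. If $x\notin S$, we fix a term realising $s(x)$, so that $x=f_{k}(a_1,\ldots,a_{n_k})$ with each $a_j\in\overline S$ and $s(a_j)<s(x)$.

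Suppose $\overline S$ has a bad sequence. We build a minimal bad sequence inductively. Choose $x_1$ of least size among first terms of bad sequences. Having chosen $x_1,\ldots,x_{m}$, choose $x_{m+1}$ of least size among elements that continue $x_1,\ldots,x_{m}$ to some bad sequence. This is well defined because sizes are natural numbers, and the resulting $(x_m)$ is bad. Since a bad sequence has no repeated entries and $S$ is finite, all but finitely many $x_m$ lie outside $S$. As there are finitely many operations, we may pass to an infinite set $M$ of indices on which every $x_m=f(a^{(m)}_1,\ldots,a^{(m)}_n)$ for one fixed $f$ of arity $n$.

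The key step, and the main obstacle, is showing that the set $A$ of all arguments $a^{(m)}_j$ with $m\in M$ is partially well ordered. Suppose $(b_t)$ is a bad sequence in $A$, where $b_t$ is an argument of $x_{i(t)}$. Each $x_i$ has only $n$ arguments and a bad sequence has no repeats, so each index $i$ occurs only finitely often. We may therefore pass to a subsequence with $i(1)<i(2)<\cdots$, and consider
\[
  x_1,\ldots,x_{i(1)-1},b_1,b_2,\ldots.
\]
This sequence is bad. There is no relation $b_t\leq b_u$ with $t<u$, by assumption. A relation $x_i\leq b_t$ with $i<i(1)$ is also impossible, because the second hypothesis gives $b_t\leq x_{i(t)}$, hence $x_i\leq x_{i(t)}$ with $i<i(t)$, contradicting badness of $(x_m)$. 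But $s(b_1)<s(x_{i(1)})$, so this bad sequence contradicts the minimal choice of $x_{i(1)}$. Hence $A$ is partially well ordered.

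Finally, $A^n$ with the componentwise order is partially well ordered. To see this, from any sequence in $A^n$ extract, coordinate by coordinate, an infinite subsequence that is weakly increasing in that coordinate. This is possible in a partially well ordered set, again by Ramsey's theorem. Applying this to the argument tuples $(a^{(m)}_1,\ldots,a^{(m)}_n)$ for $m\in M$ yields $m<m'$ in $M$ with $a^{(m)}_j\leq a^{(m')}_j$ for all $j$. Monotonicity of $f$ then gives $x_m\leq x_{m'}$, contradicting badness of $(x_m)$. Therefore every sequence in $\overline S$ is good, and $\overline S$ is partially well ordered.
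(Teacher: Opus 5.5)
The paper gives no proof of this statement. It quotes it from Higman, in the form stated by Albert and Atkinson, and uses it only as a black box in the proof of Theorem~\ref{pwo}. So there is no argument in the paper to compare against.

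Your proposal is a correct and complete proof by Nash-Williams' minimal bad sequence method, which is now the standard route to Higman's theorem. I see no gap. The key steps all check out:
\begin{itemize}
\item The size $s$ takes values in the natural numbers and satisfies $s(a_j)<s(x)$ for the arguments of a fixed minimal term.
\item Badness forces distinct terms. So only finitely many $x_m$ lie in $S$, and pigeonhole gives one top-level operation on an infinite index set $M$.
\item In the splicing step, each index $i(t)$ occurs at most $n$ times, so the subsequence with $i(1)<i(2)<\cdots$ exists.
\item The domination hypothesis $a_j\leq f_i(a_1,\ldots,a_{n_i})$ is exactly what rules out a relation $x_i\leq b_t$.
\item Ramsey's theorem gives a componentwise increasing pair in $A^n$, and monotonicity turns it into $x_m\leq x_{m'}$.
\end{itemize}
Nullary operations are also handled: if the fixed top-level operation is nullary, $A$ is empty and two terms of the bad sequence coincide.

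Compared with the paper's citation, your route costs some length but makes the argument self-contained. For the paper's application, you could even run it directly on the labelled trees in the proof of Theorem~\ref{pwo}, taking the number of nodes as the size. There, conditions (i) and (ii) of the embedding supply monotonicity and domination respectively.
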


The proof of the next theorem adapts Albert and Atkinson's~\cite{AlbAtk05}
approach to our settings. Gustedt~\cite{Gus99} proves a related well-quasi-ordering
result for substitution compositions of posets.

\begin{theorem}\label{pwo}
  A class $\C$ of Cayley permutations with only finitely many simple members is
  partially well ordered.
\end{theorem}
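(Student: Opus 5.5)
The plan is to embed $\C$ in a set that is visibly partially well ordered, namely a closure of a finite set under finitely many order-preserving operations, and then apply Theorem~\ref{higman}. Let $\mathcal S$ be the finite set of simple members of $\C$. By Theorem~\ref{module-substitution-thm}, applied repeatedly, every member of $\C$ of length at least two is $\sigma[\alpha_1,\ldots,\alpha_k]$. Here $\sigma$ is simple and is a pattern of the member, so $\sigma\in\mathcal S$. Each component $\alpha_i$ is a shorter member of $\C$, constant at repeated letters of $\sigma$. By induction on length, $\C$ is contained in the closure of $\{1\}$ under the inflation operations indexed by $\sigma\in\mathcal S$. Since a subset of a partially well ordered set is partially well ordered, it suffices to show that this closure is partially well ordered.

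For each $\sigma\in\mathcal S$ of length $k\geq2$, I define an operation $f_\sigma\colon\Cay_+^k\to\Cay_+$. If $\sigma$ is a permutation, or more generally at every singleton-fibre position, $f_\sigma$ inserts $\alpha_i$ itself. At a repeated position $i$ it must insert a constant word, to respect~\eqref{eq:constant-components}; the natural choice is $1^{|\alpha_i|}$. Monotonicity then follows from Lemma~\ref{patterns-through}. If $\alpha_i\leq\beta_i$ for all $i$, we choose an occurrence of each $\alpha_i$ inside the $i$th part of $\sigma[\beta_1,\ldots,\beta_k]$, and inside the constant run $1^{|\beta_i|}$ at repeated positions, since $|\alpha_i|\leq|\beta_i|$. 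The union of these occurrences is an occurrence of the smaller inflation. Every member of $\C$ is in the closure of $\{1\}$: a constant run $1^r$ is produced from any argument of length $r$, for instance from $1^r$ itself, which is obtained by iterating $f_{11}$ when $11\in\mathcal S$.

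The main obstacle is the second hypothesis of Theorem~\ref{higman}, that $a_j\leq f_\sigma(a_1,\ldots,a_k)$. This fails at repeated positions when $a_j$ is nonconstant, because the output contains only $1^{|a_j|}$. My first attempt is to restrict the arguments so that the condition holds. I would apply Higman's theorem to the poset $P=\Cay_+$ with modified operations $g_\sigma$ that take arguments only at the singleton positions of $\sigma$ and one extra argument per repeated value class. That extra argument would be drawn from the chain of constant words, encoded inside $P$ by replacing the argument $\gamma$ with $1^{|\gamma|}$, but also retaining $\gamma$ via a direct sum. This is messy, so I would more likely prove a two-sorted version of Higman's theorem directly. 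There, repeated positions take arguments in the well-ordered chain $(\mathbb N_{\geq1},\leq)$, and the standard minimal-bad-sequence proof goes through unchanged, since finite products of partial well orders and the chain $\mathbb N$ are partial well orders.

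With either form, the closure of $\{1\}$ is partially well ordered, and hence so is $\C$. Infinite descending chains are excluded trivially, since length strictly decreases along such a chain.
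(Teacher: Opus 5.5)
Your argument is correct, but it handles the difficulty differently from the paper. The paper never applies Higman's theorem to words. It applies Theorem~\ref{higman} verbatim to finite ordered trees whose nodes are labelled by $1$, by $\ast$, or by a simple member of $\C$ of length at least two other than $11$. A chain of $\ast$-nodes above a leaf encodes a constant run, so these chains play exactly the role of your $\mathbb N$-sort. The tree-building operations satisfy both hypotheses of Higman's theorem trivially. The price is a separate induction showing that embedding of valid trees implies containment of the represented words.

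You stay with words and instead modify Higman's theorem: arguments at repeated positions range over $(\mathbb N_{\geq1},\leq)$ and are exempt from the condition $a_j\leq f(\ldots)$. That variant is true, but it is not the form cited, so it must actually be proved. In the minimal bad sequence argument, the condition $a_j\leq f(\ldots)$ is used only when a bad sequence of arguments is spliced onto an initial segment of the minimal bad sequence. Only word arguments are spliced, and since they are strictly shorter than the output, length can serve as the rank. The $\mathbb N$-arguments enter only at the end, through the partial well order of $A^{s}\times\mathbb N^{r}$, where $A$ is the set of word arguments. This also covers quotients with no singleton fibre, such as $11$, where $s=0$.

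Your route is shorter and avoids the validity bookkeeping; the paper's keeps Higman's theorem as a black box. Finally, rest the argument on the two-sorted version alone. In your first option, one argument per repeated value class forces equal run lengths within a fibre, so it misses words such as $1121=121[11,1,1]$. Retaining $\gamma$ through a direct sum also changes the output, so the resulting closure need not contain $\C$.
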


\begin{proof}
  Let $F$ be the finite set of simple members of $\C$ of length at
  least two other than $11$. Consider finite rooted ordered trees
  whose nodes have labels in $\{1,\ast\}\cup F$. The label determines
  the number of children: a node labelled $1$ has no children, a node
  labelled $\ast$ has one child, and a node labelled $\sigma\in F$
  has $|\sigma|$ children.

  We interpret these trees as words recursively. A single leaf
  represents the word $1$. Suppose the root has label $\sigma\in F$,
  with $k=|\sigma|$, and its child subtrees represent
  $\alpha_1,\ldots,\alpha_k$ in that order. Then the tree represents
  $\sigma[\alpha_1,\ldots,\alpha_k]$, provided the components at
  repeated quotient letters are constant, as in
  \eqref{eq:constant-components}. A tree whose root has label $\ast$
  represents $1^{r+1}$ if its child subtree represents $1^r$.
  Call a tree \emph{valid} if this interpretation is defined at every
  node. In particular, $1^r$ is represented by a chain of $r-1$ nodes
  labelled $\ast$ above a leaf.

  Every nonempty member of $\C$ has a valid representation, by
  induction on length. For length one, use a leaf. For a longer
  word, Theorem~\ref{module-substitution-thm} gives a simple quotient
  and shorter components. All are patterns of the original word
  and hence belong to $\C$. If the quotient is $11$, the word is
  constant and we use the chain above. Otherwise, the quotient
  belongs to $F$; attach a root with this label above the component
  trees supplied by the induction hypothesis.
  Figure~\ref{fig:pwo-tree} illustrates the representation for the
  word $231146561$ considered in Section~\ref{deletions}.

  Now consider all finite ordered trees with these labels and the
  prescribed numbers of children.
  We define an embedding order on trees that implies pattern containment of the corresponding words.
  Recursively, let $S\preccurlyeq T$
  if and only if at least one of the following conditions holds:
  \begin{enumerate}
    \item[(i)] The roots of $S$ and $T$ have the same label, and
      their ordered child subtrees $S_1,\ldots,S_k$ and
      $T_1,\ldots,T_k$ satisfy $S_i\preccurlyeq T_i$ for every $i$.
    \item[(ii)] There is a child of the root of $T$ whose rooted
      subtree $U$ satisfies $S\preccurlyeq U$.
  \end{enumerate}
  In particular, a
  leaf embeds in a leaf by (i), since there are no children to compare.
  Furthermore, repeated use of (ii) allows the root of $S$ to be matched with a
  node below the root of $T$.

  For each internal label, consider the operation of attaching a
  root with that label above the prescribed number of trees, in
  order. Condition (i) makes this operation order preserving, and
  (ii) ensures that every argument embeds in the resulting tree.
  These finitely many
  operations generate all the trees from the single leaf.
  Theorem~\ref{higman} therefore makes this set of trees partially
  well ordered. Its subset of valid trees is also partially well
  ordered.

  On valid trees, embedding implies containment of the represented
  words. We prove this by induction on the number of nodes of $T$.
  In case (ii), the word represented by $S$ is a pattern of the word
  represented by $U$, by induction. The word represented by $U$ is
  in turn a pattern of the word represented by $T$: at an inflation
  root it is one of the components, and at a root labelled $\ast$
  this follows from $1^r\leq1^{r+1}$. Transitivity gives the result.
  In case (i), two leaves both represent $1$. If the common root
  label is $\sigma\in F$, the induction hypothesis gives an occurrence
  of each component represented at a child of $S$ in the component
  represented at the corresponding child of $T$. Combining these
  occurrences gives an occurrence of the smaller inflation in the
  larger one.
  If the common label is $\ast$, let the child subtrees represent
  $1^r$ and $1^s$. The induction hypothesis gives $r\leq s$, and
  hence $1^{r+1}\leq1^{s+1}$ for the words represented by $S$ and $T$.
  Thus any infinite sequence of nonempty members of $\C$ has an
  earlier member contained in a later one, by choosing their
  representing trees and applying partial well order there.
  Adjoining the empty word preserves this property.
\end{proof}

\begin{figure}[htbp]
  \centering
  \begin{tikzpicture}[
      quotient/.style={draw,rounded corners=2pt,inner sep=4pt},
      leaf/.style={circle,draw,inner sep=2pt}]
    \node[quotient] (root) at (0,0) {$2131$};
    \node[quotient] (left) at (-3.8,-1.2) {$12$};
    \node[quotient] (constant) at (-2.1,-1.2) {$\ast$};
    \node[quotient] (right) at (1,-1.2) {$12$};
    \node[leaf] (last) at (3.8,-1.2) {$1$};
    \node[leaf] (left1) at (-4.3,-2.4) {$1$};
    \node[leaf] (left2) at (-3.3,-2.4) {$1$};
    \node[leaf] (constant1) at (-2.1,-2.4) {$1$};
    \node[leaf] (right1) at (-.6,-2.4) {$1$};
    \node[quotient] (middle) at (1.7,-2.4) {$212$};
    \node[leaf] (middle1) at (.6,-3.6) {$1$};
    \node[leaf] (middle2) at (1.7,-3.6) {$1$};
    \node[leaf] (middle3) at (2.8,-3.6) {$1$};
    \draw (root) -- (left) (root) -- (constant)
      (root) -- (right) (root) -- (last);
    \draw (left) -- (left1) (left) -- (left2)
      (constant) -- (constant1);
    \draw (right) -- (right1) (right) -- (middle);
    \draw (middle) -- (middle1) (middle) -- (middle2)
      (middle) -- (middle3);
  \end{tikzpicture}
  \caption{A valid tree representing
    $231146561=2131[12,11,1323,1]$.
    The $\ast$ node represents $11$, and the right-hand $12$ node
    represents $12[1,212]=1323$.}
  \label{fig:pwo-tree}
\end{figure}

\begin{theorem}\label{finite-basis}
  A class $\C$ of Cayley permutations with only finitely many simple members has a
  finite basis.
\end{theorem}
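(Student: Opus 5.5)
The plan is to follow Albert and Atkinson's argument and embed $\C$ in its substitution closure. Put $\mathcal D=\cl{\C}$. By Lemma~\ref{closure-simples}, $\mathcal D$ has the same simple members as $\C$, so finitely many. By Corollary~\ref{closure-basis}, the basis of $\mathcal D$ consists of words of length at most $k+2$, where $k$ is the length of the longest simple members; in particular it is finite. By Theorem~\ref{pwo}, $\mathcal D$ is partially well ordered.

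Next I will write the basis $\mathcal B$ of $\C$ as $\mathcal B=\mathcal B_1\cup\mathcal B_2$. Here $\mathcal B_1=\mathcal B\setminus\mathcal D$ and $\mathcal B_2=\mathcal B\cap\mathcal D$.

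For $\mathcal B_1$: a basis element $\beta\notin\mathcal D$ contains some basis element $\beta'$ of $\mathcal D$. Since $\beta'\notin\mathcal D\supseteq\C$, we have $\beta'\notin\C$. Minimality of $\beta$ in the complement of $\C$ then forces $\beta=\beta'$. Hence $\mathcal B_1$ is contained in the finite basis of $\mathcal D$.

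For $\mathcal B_2$: it is a subset of $\mathcal D$ and an antichain, since any basis is an antichain. Because $\mathcal D$ is partially well ordered, $\mathcal B_2$ is finite. Therefore $\mathcal B$ is finite.

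The substantive inputs are therefore Theorem~\ref{pwo} and Corollary~\ref{closure-basis}, both already available. The only point needing care is the minimality argument for $\mathcal B_1$: one must observe that every non-member of $\mathcal D$ contains a basis element of $\mathcal D$. This holds because strictly descending chains in $\Cay$ are finite. Finally, I will note the degenerate case where $\C$ is empty or has no simple members of length at least two: then $k$ is small, or $\mathcal D$ is trivially finitely based, and the same reasoning applies.
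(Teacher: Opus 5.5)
Your proposal is correct and takes essentially the same approach as the paper. You split the basis of $\C$ by membership in $\cl{\C}$, bound the elements outside $\cl{\C}$ via Corollary~\ref{closure-basis}, and finish the elements inside by the partial well order of $\cl{\C}$ (Lemma~\ref{closure-simples} with Theorem~\ref{pwo}); the one cosmetic difference is that the paper shows a basis element outside $\cl{\C}$ is a basis element of $\cl{\C}$ directly, since all its proper patterns lie in $\C\subseteq\cl{\C}$, rather than passing through a minimal element below it.
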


\begin{proof}
  If $\C$ contains no nonempty word, its basis consists of the empty word
  or the word $1$. Otherwise, let $k$ be the greatest length of a simple
  member of $\C$. By Lemma~\ref{closure-simples} and Theorem~\ref{pwo},
  the substitution closure $\cl{\C}$ is partially well ordered.

  The basis elements of $\C$ that lie in $\cl{\C}$ form an antichain
  there, so there are finitely many. Any remaining basis element is also
  a basis element of $\cl{\C}$: it lies outside $\cl{\C}$,
  while all its proper patterns lie in $\C\subseteq\cl{\C}$.
  Corollary~\ref{closure-basis} bounds its length by $k+2$, leaving only
  finitely many possibilities.
\end{proof}

It remains to find the generating function. As in
Example~\ref{algebraic-class}, we must keep track of avoidance when
inflating a quotient. A finite basis makes this possible with
finitely many generating functions by recording which patterns below a
basis element occur in each component. This is analogous to the
query-complete property sets used by Brignall, Huczynska, and
Vatter~\cite{BHV08} for ordinary permutations: the properties satisfied
by an inflation are determined by its quotient and the properties
satisfied by its components. Before giving the construction, we
state the algebraic fact needed to pass from the equations to the result.

\begin{lemma}\label{algebraic-system}
  Let $y_j=P_j(x,y_1,\ldots,y_m)$, $1\leq j\leq m$, be a
  \emph{proper algebraic system} over $\QQ$; that is,
  $P_1,\ldots,P_m\in\QQ[x,y_1,\ldots,y_m]$ satisfy
  \begin{align*}
    P_j(0,\ldots,0)&=0,
      &&1\leq j\leq m,\\
    \frac{\partial P_j}{\partial y_\ell}(0,\ldots,0)&=0,
      &&1\leq j,\ell\leq m.
  \end{align*}
  Then the system
  \[
    y_j=P_j(x,y_1,\ldots,y_m),\qquad\qquad\quad 1\leq j\leq m,
  \]
  has a unique solution
  $(f_1,\ldots,f_m)$ with every $f_j\in x\mskip1mu\QQ[[x]]$, and each $f_j$ is
  algebraic over $\QQ(x)$. Moreover, given
  $H\in\QQ[x,y_1,\ldots,y_m]$, a nonzero polynomial in $\QQ[x,T]$
  annihilating $H(x,f_1,\ldots,f_m)$ can be computed from $H$ and the $P_j$.
\end{lemma}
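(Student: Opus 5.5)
We prove the three assertions in turn: existence and uniqueness of the power-series solution, its algebraicity, and the computability of an annihilating polynomial for $H(x,f_1,\ldots,f_m)$.

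\emph{Existence and uniqueness.} Consider the map $\Psi(y_1,\ldots,y_m)=(P_1,\ldots,P_m)$ on $(x\,\QQ[[x]])^m$. The hypotheses $P_j(0)=0$ ensure that $\Psi$ maps this set into itself. The vanishing of $\partial P_j/\partial y_\ell$ at the origin means that every monomial of $P_j$ is one of three kinds: a pure power of $x$ of degree at least one; a monomial of total degree at least two in the $y$'s; or a monomial containing both $x$ and at least one $y$. Hence, if two tuples agree modulo $x^N$, their images agree modulo $x^{N+1}$. So $\Psi$ is a contraction for the $x$-adic metric. The iterates $\Psi^n(0)$ therefore converge to the unique fixed point, and the coefficient of $x^N$ is determined after $N$ steps.

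\emph{Algebraicity.} Consider the polynomials $Q_j=y_j-P_j\in\QQ(x)[y_1,\ldots,y_m]$. Their Jacobian matrix with respect to $y$ is $I-(\partial P_j/\partial y_\ell)$. At the point $(x,f)=(0,0)$ its entries $\partial P_j/\partial y_\ell$ vanish, so its determinant $J(x,f)$ is a power series with constant term $1$, hence nonzero. Let $K$ be the algebraic closure of $\QQ(x)$ inside the field $\QQ((x))$, and let $L=K(f_1,\ldots,f_m)$. If some $f_j$ were transcendental over $K$, then $\mathrm{trdeg}_K L=d\geq1$. The vector $(f_1,\ldots,f_m)$ would then be a generic point of a $d$-dimensional irreducible component of the variety $V(Q_1,\ldots,Q_m)$. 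But the Jacobian criterion says that a point where the Jacobian of $m$ equations in $m$ unknowns is invertible is an isolated (zero-dimensional, reduced) point of the variety. This is a contradiction. For a cleaner and more constructive route, I would instead use elimination, as follows.

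\emph{Constructive elimination.} Form the ideal $\mathfrak I=(Q_1,\ldots,Q_m,\;t\,J-1,\;T-H)$ in $\QQ[x,y_1,\ldots,y_m,t,T]$, where $J=\det(I-\partial P/\partial y)$. Eliminate $y_1,\ldots,y_m,t$ with a Gröbner basis for an elimination order to get $\mathfrak I\cap\QQ[x,T]$. The tuple $(x,f,1/J(x,f),H(x,f))$ is a zero of $\mathfrak I$ over $\QQ((x))$, so every element of this intersection annihilates $H(x,f)$. It remains to see that the intersection is nonzero. The localisation at $J$ makes every zero of $Q$ over $\overline{\QQ(x)}$ nonsingular, and by the Jacobian criterion nonsingular zeros of a square system are isolated. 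Hence $V(\mathfrak I)$ over the algebraic closure of $\QQ(x)$ is finite, so $\mathfrak I\otimes\QQ(x)$ is zero-dimensional. Then $\QQ(x)[T]\cap\mathfrak I\QQ(x)[y,t,T]$ contains a nonzero polynomial. Clearing denominators gives a nonzero element of $\QQ[x,T]$. Taking $H=y_j$ yields algebraicity of each $f_j$.

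\emph{Main obstacle.} The delicate step is justifying that the localised system is zero-dimensional over $\QQ(x)$, that is, turning the invertibility of the Jacobian into finiteness of the solution set; this is where I would cite the Jacobian criterion carefully. Alternatively, I would cite the standard result of Kuich–Salomaa (or Flajolet–Sedgewick, Theorem on proper positive systems) that a proper algebraic system has algebraic components. I would then note that resultants or Gröbner bases give the effective annihilator, and that $H(x,f)$ lies in the algebraic field $\QQ(x)(f_1,\ldots,f_m)$.
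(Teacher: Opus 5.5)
Your proof is correct, but it takes a different route from the paper's. The paper's proof is essentially by citation. It takes existence, uniqueness and algebraicity from Stanley (Proposition~6.6.3 and Theorem~6.6.10), and Panholzer's algorithm for an annihilating polynomial of a \emph{component} of a proper system. It then handles a general $H$ by appending the equation $z=H(x,P_1(x,y),\ldots,P_m(x,y))-c$ with $c=H(0,\ldots,0)$, checking by the chain rule that the enlarged system is still proper, and finally replacing $T$ by $T-c$.

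You instead give a self-contained argument. Your $x$-adic contraction is exactly the content of Stanley's Proposition~6.6.3. Your single elimination in $(Q_1,\ldots,Q_m,\,tJ-1,\,T-H)$ gives algebraicity (take $H=y_j$) and the effective annihilator at once, so in effect you prove the algorithmic statement the paper imports.

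Putting $T-H$ directly into the ideal is a small gain. Finiteness of $V(\mathfrak I)$ does not depend on $H$, so you need no enlarged system, no second properness check and no shift. Your version also shows exactly where properness is used. Since $J(x,f)$ has constant term $1$, the power-series solution survives the localisation $tJ=1$. The localisation in turn discards positive-dimensional components of $V(Q)$ that would otherwise kill the elimination ideal. For instance, $Q_1=(y_1-x)(1-y_1-y_2)$ and $Q_2=(y_2-x^2)(1-y_1-y_2)$ come from a proper system, yet $(Q_1,Q_2)\cap\QQ[x,y_1]=0$.

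The step you flag closes in one line. Let $p$ be a zero of $Q$ over $\overline{\QQ(x)}$ with $J(p)\neq0$. The differentials of $Q_1,\ldots,Q_m$ at $p$ span $\mathfrak m_p/\mathfrak m_p^2$, so by Nakayama they generate $\mathfrak m_p$ in the local ring at $p$. The local ring of $V(Q)$ at $p$ is then a field, so $\{p\}$ is an irreducible component, and there are only finitely many components.

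Your preliminary transcendence-degree paragraph is loose as written. There $f$ is the generic point of an irreducible closed subset of $V(Q)$, not necessarily of a component, and it is not a closed point, so isolatedness does not apply to it directly. The elimination argument supersedes it, and you can simply drop it.
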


\begin{proof}
  Existence and uniqueness of the solution with zero constant terms,
  and algebraicity of its components, follow from
  Stanley's~\cite[Proposition~6.6.3 and Theorem~6.6.10]{Sta99} results
  on proper algebraic systems.

  Panholzer~\cite{Pan05} gives an algorithm computing an annihilating
  polynomial for any component of a proper algebraic system. To apply it
  to $H$, write $y=(y_1,\ldots,y_m)$, put $c=H(0,\ldots,0)$, and append
  the equation
  \[
    z=H\bigl(x,P_1(x,y),\ldots,P_m(x,y)\bigr)-c.
  \]
  Its right-hand side vanishes at the origin, as do all its derivatives
  with respect to the unknowns, so the enlarged system is still proper.
  Its new component is $g=H(x,f_1,\ldots,f_m)-c$. Panholzer's algorithm
  therefore computes a nonzero $q(x,T)\in\QQ[x,T]$ with $q(x,g)=0$.
  Then $q(x,T-c)$ is the required polynomial annihilating
  $H(x,f_1,\ldots,f_m)$.
\end{proof}

\begin{theorem}\label{algebraicity}
  Let $\C$ be a class of Cayley permutations with only finitely many simple
  members. Its ordinary generating function
  $G(x)=\sum_{\omega\in\C}x^{|\omega|}$ is algebraic over $\QQ(x)$.
  Given the simple members and the basis of $\C$, one can compute a nonzero
  polynomial $q(x,T)\in\QQ[x,T]$ such that $q(x,G(x))=0$.
\end{theorem}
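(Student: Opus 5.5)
Since the class has finitely many simple members, Theorem~\ref{finite-basis} gives it a finite basis $\mathcal B$, which we take as given. Let $\Pi$ be the finite set of all nonempty Cayley permutations $\pi$ with $\pi\leq\beta$ for some $\beta\in\mathcal B$. For a word $\omega$, call its \emph{profile} the set $p(\omega)=\{\pi\in\Pi:\pi\leq\omega\}$. Then $\omega\in\C$ exactly when $p(\omega)\cap\mathcal B=\emptyset$.

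\textbf{Key step: profiles propagate through inflations.} Let $\omega=\sigma[\alpha_1,\ldots,\alpha_k]$ with $\sigma$ simple, as in Theorem~\ref{module-substitution-thm}. By Lemma~\ref{patterns-through}, $\pi\leq\omega$ if and only if $\pi=\st(\sigma|_T)[\gamma_i:i\in T]$ for some $T$ and nonempty $\gamma_i\leq\alpha_i$, with the $\gamma_i$ constant at repeated letters. Any such $\gamma_i$ is a pattern of $\pi$, so it lies in $\Pi$ whenever $\pi\in\Pi$. It follows that $p(\omega)$ is determined by $\sigma$ together with the profiles $p(\alpha_1),\ldots,p(\alpha_k)$. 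We can therefore define a function $\Psi_\sigma$ on $k$-tuples of profiles with $p(\omega)=\Psi_\sigma(p(\alpha_1),\ldots,p(\alpha_k))$, and $\Psi_\sigma$ is computable by finite enumeration.

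\textbf{Setting up the system.} For each admissible profile $P\subseteq\Pi$ with $P\cap\mathcal B=\emptyset$, introduce the generating function $y_P$ of nonempty members of $\C$ with profile $P$. Only finitely many quotients can occur: $\sigma$ is a pattern of $\omega\in\C$, so it is one of the finitely many simple members of length at least three, or one of $11,12,21$.
\begin{itemize}
\item[(a)] \emph{Quotient $11$ (constant words).} The profile of $1^r$ stabilizes once $r$ exceeds the longest constant word in $\Pi$. So constant words contribute a polynomial plus a rational term $x^{N}/(1-x)$, which is harmless.
\item[(b)] \emph{Quotient $\sigma\in F$.} Write $y_P$ as the sum, over $\sigma$ and over tuples $(P_1,\ldots,P_k)$ with $\Psi_\sigma(P_1,\ldots,P_k)=P$, of $\prod_i y_{P_i}$. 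At repeated positions, replace $y_{P_i}$ by the generating function of constant words with profile $P_i$; this is a known rational function, which we reduce to polynomial unknowns by adding auxiliary variables $c_P$ with $c_P=x+x\,c_{P'}$-type equations.
\item[(c)] \emph{Quotients $12$ and $21$.} To keep uniqueness, introduce variables $u_P$ for plus-indecomposable members and $v_P$ for minus-indecomposable members. Then $y_P=u_P+\sum u_{P_1}y_{P_2}$ over the appropriate tuples, and $u_P$ equals $y_P$ minus its direct-sum part. It is cleaner to write $u_P$ directly as the sum of all contributions to $y_P$ except those from quotient $12$.
\end{itemize}
The final answer is $G=1+\sum_P y_P$.

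\textbf{Main obstacle: properness.} Every inflation term has degree at least two in the unknowns, or carries a factor $x$, so the only concern is linear terms. These come from the leaf $x$ and from the constant-word auxiliaries of the form $c=x+x\,c'$. Both have no constant term and no linear term at the origin, since the linear term $x\,c'$ carries a factor of $x$. The equation $u_P=y_P-(\cdots)$ is not of the required form, so I would instead expand the right-hand sides so that each unknown is expressed only through products. Once the system is proper, Lemma~\ref{algebraic-system} gives algebraicity, together with an annihilating polynomial for $H=1+\sum_P y_P$.

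I would also check that the profile system has the counting series as its unique power-series solution. This follows because the combinatorial decomposition is unique by Theorem~\ref{module-substitution-thm}, so the counting series satisfy the system, and uniqueness in Lemma~\ref{algebraic-system} identifies them with the solution.
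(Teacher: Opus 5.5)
Your proposal is correct and is essentially the paper's proof: you track profiles over the downward closure of the basis and propagate them through the unique simple decomposition of Theorem~\ref{module-substitution-thm} via Lemma~\ref{patterns-through}, with separate plus- and minus-indecomposable series and rational constant-word series, and you conclude with Lemma~\ref{algebraic-system}. The differences are cosmetic: you count $\C$ directly over profiles disjoint from $\mathcal B$ rather than counting all of $\cl{\C}$, which is valid because $\C$ is hereditary and membership is read off the profile, and you encode constant words by auxiliary unknowns instead of clearing denominators. You should, however, set aside the degenerate case where $\C$ has no nonempty member, as the paper does, since when the empty word is a basis element your formula $1+\sum_P y_P$ gives $1$ rather than $0$.
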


\begin{proof}
  If $\C$ contains no nonempty word, then $G(x)$ is $0$ or $1$,
  and the result is immediate. Otherwise, both
  the empty word and $1$ belong to $\C$.

  We construct a proper algebraic system that counts the nonempty members
  of $\cl{\C}$ according to the patterns they contain. We can then
  recover $G$ by selecting the members that avoid the basis of $\C$.
  Let $\mathcal B$ be the basis of $\C$, finite by Theorem~\ref{finite-basis},
  and let $\Gamma$ consist of all patterns contained in a member of
  $\mathcal B$. This is a finite set, closed downward. For a word $\omega$, let
  \[
    \mathcal P(\omega)=\{\gamma\in\Gamma:\gamma\leq \omega\}
  \]
  be the set of patterns in $\Gamma$ contained in $\omega$, called its
  \emph{profile}. This is analogous
  to the set of satisfied properties recorded by Brignall, Huczynska,
  and Vatter~\cite{BHV08}, with the properties here being containment
  of the individual patterns in $\Gamma$.
  Since $\mathcal B\subseteq\Gamma$, a word $\omega\in\cl{\C}$ belongs
  to $\C$ if and only if $\mathcal P(\omega)\cap\mathcal B=\emptyset$.

  The set $\mathcal P(\omega)$ is determined by the quotient of $\omega$ and the
  corresponding sets for its components. To see this, let
  $\omega=\sigma[\alpha_1,\ldots,\alpha_k]$. By Lemma~\ref{patterns-through},
  a nonempty $\beta\in\Gamma$ occurs in $\omega$ if and only if
  \[
    \beta=\st(\sigma|_T)[\gamma_i:i\in T]
  \]
  for nonempty $T\subseteq[k]$ and nonempty $\gamma_i\leq\alpha_i$,
  with $\gamma_i$ constant whenever $\sigma(i)$ is repeated. Each $\gamma_i$ is
  a pattern of $\beta$, so it lies in $\Gamma$. Thus the condition
  $\gamma_i\leq\alpha_i$ can be read from $\mathcal P(\alpha_i)$.
  Enumerating the finitely many choices of $T$ and the $\gamma_i\in\Gamma$
  determines whether each $\beta\in\Gamma$ occurs, and hence determines
  $\mathcal P(\omega)$. The empty
  pattern, if in $\Gamma$, belongs to every $\mathcal P(\omega)$.

  For each subset $Q\subseteq\Gamma$, define the ordinary generating function
  \[
    g_Q(x)=\sum_{\substack{\omega\in\cl{\C}\cap\Cay_+\\
                           \mathcal P(\omega)=Q}}x^{|\omega|}.
  \]
  Define $g_Q^+(x)$ and $g_Q^-(x)$ by restricting this sum to plus
  indecomposable and minus indecomposable words, respectively.
  Since $\Gamma$ is finite,
  this gives a finite list of generating functions.
  We also need the
  series $c_Q$ for the nonempty constant words in the closure with
  profile $Q$. These are rational: if $L$ is the greatest length in
  $\Gamma$, taking $L=1$ if $\Gamma$ is empty, then
  $\mathcal P(1^r)$ is constant for $r\geq L$. Hence each $c_Q$ is a
  finite sum of powers of $x$, possibly together with $x^L/(1-x)$.
  If $11$ is absent from the closure, only $1$ is available instead.

  We obtain equations for the $g_Q,g_Q^+,g_Q^-$ from the unique
  decomposition in Theorem~\ref{module-substitution-thm}.
  Constant words are counted by $c_Q$ and contribute to all three
  series, since they are both plus and minus indecomposable.
  By Lemma~\ref{closure-simples}, the available nonconstant simple
  quotients are exactly those in $\C$. For such a quotient $\sigma$
  of length $k\geq2$, choose component profiles $Q_1,\ldots,Q_k$.
  At a repeated quotient letter use the factor $c_{Q_i}$; at a singleton
  fibre use $g_{Q_i}$, with two exceptions: the first factor is
  $g_{Q_1}^+$ when $\sigma=12$ and $g_{Q_1}^-$ when $\sigma=21$.
  These are exactly the restrictions that make the
  decomposition unique.

  The product of these $k$ factors counts the inflations with the chosen
  property sets. Compute the resulting set $Q=\mathcal P(\omega)$
  as above, and include this product in the equation for $g_Q$.
  To obtain the equation for $g_Q^+$, omit the
  quotient $12$; to obtain that for $g_Q^-$, omit $21$. Indeed, these
  quotients characterize plus and minus decomposability, respectively.
  There are finitely many quotients and profiles, so this gives
  a finite system polynomial in the unknown series, with rational
  functions of $x$ as coefficients regular at the origin.
  Each right-hand side vanishes at the origin and has zero Jacobian
  there. Indeed, every inflation contribution has at least two nonempty parts,
  and all the factors counting such parts have zero constant term.

  Clear denominators, normalized to have constant term $1$, and move
  their nonconstant multiples of the unknowns to the right. This gives
  a proper polynomial system. Lemma~\ref{algebraic-system} therefore applies,
  and the counting series form its
  unique solution with zero constant terms. The ordinary generating
  function of $\C$ is
  \[
    G(x)
    =1+\sum_{\substack{Q\subseteq\Gamma\\Q\cap\mathcal B=\emptyset}}g_Q(x),
  \]
  so it is algebraic. Finally, $\Gamma$ and all the polynomial equations
  are computable from the basis and simple members of $\C$ by finite
  enumeration. Applying the last assertion of Lemma~\ref{algebraic-system}
  to $1+\sum_{Q\cap\mathcal B=\emptyset}g_Q(x)$ produces the required
  polynomial $q(x,T)$.
\end{proof}

Every simple word is semisimple, so Theorem~\ref{pwo},
Theorem~\ref{finite-basis} and Theorem~\ref{algebraicity} apply in particular
to classes with finitely many semisimple members. Their simple members
are obtained by retaining the primitive words and $11$. In this case, the
profile construction can also impose~\eqref{eq:repeat-restriction} and count
$\clH{\C}$. Use its finitely many semisimple quotients of length
at least two, and at every repeated quotient letter require the
component profile $\mathcal P(1)$ and use the factor $x$.
The remaining word $1$ contributes $x$ to each of the three series
with profile $\mathcal P(1)$; the quotient $11$, when present, is
counted with the other quotients. The profile transitions and the
normalizations at $12$ and $21$ are unchanged. This gives a proper
polynomial system directly.

\begin{example}\label{algebraic-system-class}
  Let $\C=\Av(112,212,231)$. Direct enumeration finds no simple members
  of lengths four or five, so Corollary~\ref{two-lengths} shows that its
  nonempty simple members are
  \begin{equation}\label{example-simple-members}
    1,\quad 11,\quad 12,\quad 21,\quad 121.
  \end{equation}
  By Lemma~\ref{closure-simples}, these are also the nonempty simple members
  of the closure $\cl{\C}$.
  However, $\C$ contains infinitely many nonconstant semisimple members as
  every word $121^b$, with $b\geq1$, is semisimple and lies in $\C$.
  The class is not substitution-closed, since $21,12,1\in\C$ but
  $21[12,1]=231\notin\C$. Nevertheless, every hereditary subclass of $\C$ has finitely
  many simple members and therefore has a finite basis and an algebraic
  generating function by Theorems~\ref{finite-basis} and~\ref{algebraicity}.

  Write $\mathcal B=\{112,212,231\}$ and let
  $\Gamma=\{1,11,12,21,112,212,231\}$
  be its downward closure, omitting the empty pattern.
  The simple basis element $212$ does not belong to
  $\cl{\C}$, so every member of $\cl{\C}$ avoids it. Thus
  $\omega\in\cl{\C}$ belongs to $\C$ if and only if
  $112,231\notin\mathcal P(\omega)$. We use the profile construction
  in the proof of Theorem~\ref{algebraicity}, summing
  over the sets $Q$ disjoint from $\mathcal B$ and distinguishing
  whether $11\in Q$ and whether $12\in Q$. Write
  \begin{align*}
    A&=\sum_{\substack{Q\subseteq\Gamma,\ Q\cap\mathcal B=\emptyset\\
                       12\notin Q}}g_Q,&
    B&=\sum_{\substack{Q\subseteq\Gamma,\ Q\cap\mathcal B=\emptyset\\
                       12\in Q}}g_Q,&
    I&=\sum_{\substack{Q\subseteq\Gamma,\ Q\cap\mathcal B=\emptyset\\
                       11\notin Q}}g_Q^+.
  \end{align*}
  Thus $A$ counts the nonempty members avoiding $12$, $B$ counts those
  containing $12$, and $I$ counts the plus indecomposable members avoiding
  $11$. Every word counted by $A$ is weakly decreasing and hence plus
  indecomposable. Its minus indecomposable members are the nonempty
  constant words, counted by $X=x/(1-x)$.
  The words counted by $I$ are precisely the plus indecomposable
  $231$-avoiding permutations, each of which consists of its maximum followed by
  an arbitrary $231$-avoiding permutation. Thus $I=x\,\mathrm{Cat}(x)$ and, using that $\mathrm{Cat}(x)=1+x\,\mathrm{Cat}(x)^2$,
  $$
  I=x\,\mathrm{Cat}(x)
  = x\bigl(1+x\,\mathrm{Cat}(x)^2\bigr)
  = x + x^2\,\mathrm{Cat}(x)^2
  = x + I^2.
  $$

  We now read the contributions quotient by quotient. The nonempty
  constant words contribute $X$ to $A$. For the quotient $12$, the
  first component is plus indecomposable and must avoid $11$, or else two equal
  letters there would
  form $112$ together with any letter of the second component. It is therefore counted by $I$.
  Since $231$ is plus indecomposable, an inflation of $12$ avoids $231$
  precisely when both components do. With the first component avoiding
  $11$, no occurrence of $112$ crosses the components either. Every such
  inflation contains $12$, giving the contribution $I(A+B)$ to $B$.

  For the quotient $21$, an occurrence of $231$ crosses the two components
  precisely when the first contains $12$. This component must also be
  minus indecomposable, so it is constant. No occurrence of $112$ crosses
  these components. The resulting inflation
  contains $12$ if and only if the second component does, giving $XA$ to
  $A$ and $XB$ to $B$.

  For the quotient $121$, the two positions in the repeated fibre are
  inflated by arbitrary nonempty constant words $1^a$ and $1^b$, whereas
  the middle component $\alpha$ is free. Avoidance of $112$ forces $a=1$.
  An occurrence of $231$ crosses
  the components of $121[1^a,\alpha,1^b]$
  precisely when $\alpha$ contains $12$. Thus $\alpha$ is counted by $A$,
  and these inflations contribute $xXA$ to $B$.

  Collecting the contributions, we obtain the system
  \begin{align*}
    A&=X+XA,\quad X=\frac{x}{1-x},\\
    B&=I(A+B)+XB+xXA,\\
    I&=x+I^2.
  \end{align*}
  Each right-hand side vanishes at the origin and has no term linear in
  an unknown alone. Clearing denominators gives a proper algebraic
  system, as in the proof of the theorem.

  To eliminate the auxiliary series, put $G=1+A+B$. The first equation
  gives $A=X/(1-X)$, and adding the first two gives
  $G-1=X+(I+X)(G-1)+xXA$. Hence
  \[
    G=1+\frac{X+xX^2/(1-X)}{1-X-I},
    \qquad I=\frac{1-\sqrt{1-4x}}{2}.
  \]
  This generating function is algebraic but not rational, since $I$ can
  be recovered rationally from $G$. Eliminating $I$ and clearing
  denominators produces the annihilating polynomial
  \begin{align*}
    q(x,T)={}&x^2(1-2x)^2(T-1)^2\\
      &-(1-x)^2(1-2x)(1-3x)(T-1)+x(1-x)^4.
  \end{align*}
  The solution with constant term $1$ begins
  \[
    G(x)=1+x+3x^2+10x^3+33x^4+109x^5+363x^6+1223x^7+\cdots
  \]

  For comparison, let $\mathcal D=\Av(111,112,212,231)$. Direct enumeration
  and Corollary~\ref{two-lengths} show that its nonempty semisimple members
  are exactly the five simple words listed in~\eqref{example-simple-members}. The same calculation now
  has $X=x+x^2$, since the nonempty constant words are $1$ and $11$,
  and the quotient $121$ contributes $x^2A$, since both outer components
  must be $1$. The series $I$ is unchanged. Thus the generating function
  of $\mathcal D$ is
  \[
    G_{\mathcal D}=1+\frac{(x+x^2)(1-x)}
      {(1-x-x^2)(1-x-x^2-I)},
  \]
  and begins
  \[
    G_{\mathcal D}(x)=1+x+3x^2+9x^3+28x^4+88x^5+284x^6+937x^7+\cdots
  \]
  Neither counting sequence appears in the OEIS~\cite{oeis}.
\end{example}

An exhaustive computation over the $2^{13}-1=8191$ nonempty bases
$\mathcal B\subseteq\Cay_3$ finds exactly $7380$ for which
$\Av(\mathcal B)$ has no simple member of length six or seven.
By Corollary~\ref{two-lengths}, all simple members of these classes
have length at most five. By Theorems~\ref{finite-basis}
and~\ref{algebraicity}, every hereditary subclass of any of these classes
has a finite basis and an algebraic generating function.

\section*{Acknowledgements}

We thank Christian Bean and Robert Brignall for valuable feedback on an
earlier version on this article.

The large language models GPT (OpenAI) and Claude (Anthropic)
were used during the preparation of this manuscript to assist with exploring and
checking conjectures through programming, drafting some propositions
and their proofs, light editing, and proofreading.

\end{document}